\documentclass[10pt,a4paper,oneside]{amsproc}
\usepackage[utf8]{inputenc}
\usepackage{latexsym, amsthm}
\usepackage{amsfonts, amsmath, amssymb,comment, mathtools}
\usepackage{euscript,mathrsfs}
\usepackage{url}
\usepackage{array}
\usepackage[a4paper]{geometry}
\usepackage{graphics}
\usepackage[usenames]{color}
\usepackage{tikz-cd}

\DeclareFontFamily{U}{mathx}{}
\DeclareFontShape{U}{mathx}{m}{n}{<-> mathx10}{}
\DeclareSymbolFont{mathx}{U}{mathx}{m}{n}
\DeclareMathAccent{\widehat}{0}{mathx}{"70}
\DeclareMathAccent{\widecheck}{0}{mathx}{"71}

\newtheorem{theorem}{Theorem}[section]

\newtheorem{example}[theorem]{Example}
\newtheorem{conjecture}{Conjecture}

\newtheorem{question}[theorem]{Question}
\newtheorem{definition}[theorem]{Definition}
\newtheorem{remark}[theorem]{Remark}

\newtheorem{lemma}[theorem]{Lemma}

\newtheorem{proposition}[theorem]{Proposition}

\newcommand{\B}{\mathcal{B}}

\newcommand{\ZZ}{\mathbb{Z}}

\newcommand{\Fq}{\mathbb{F}_q}

\newcommand{\M}{\mathcal{M}}
\newcommand{\F}{\mathfrak{F}}

\newcommand{\Omb}{\Omega_\alpha (\ell, V_{m}, \B)}
\newcommand{\Ombb}{\Omega_{\alpha'} (\ell, V_{m-1}, \B')}
\newcommand{\Ombh}{\Omega_{\widecheck{\alpha}} (\ell -1, V_{m-1}, \B')}
\def\a{{\alpha}}
\def\Fq{{\mathbb F}_q}

\def\FF{{\mathbb F}}
\def\PP{{\mathbb P}} 
\newcommand{\V}{{\mathcal{V}}}

\begin{document}
\title{Towards the characterization of minimum weight codewords of Schubert codes}  

\author{Mrinmoy Datta}

\address{Department of Mathematics, \newline \indent
Indian Institute of Technology Hyderabad, Kandi, Sangareddy, Telangana, India}
\email{mrinmoy.datta@math.iith.ac.in}
\thanks{The first named author is partially supported by a research grant 02011/34/2025/R\&D-II/DAE/9465 from the National Board for Higher Mathematics, Department of Atomic Energy, India.} 

\author{Tiasa Dutta}

\address{Department of Mathematics, \newline \indent
Indian Institute of Technology Hyderabad, Kandi, Sangareddy, Telangana, India}
\email{duttatiasa98@gmail.com}
\thanks{The second named author is partially supported by a DST-INSPIRE Ph.D. fellowship from the
Department of Science and Technology, Govt. of India}

\author{Trygve Johnsen}
\address{Department of Mathematics and Statistics, University of Tromsø (UiT), Norway}
\email{trygve.johnsen@uit.no }
\thanks{The third named author was supported by the Tromsø Research Foundation (project “Pure Mathematics in Norway”) and the  UiT Aurora project MASCOT.}

\keywords{Finite fields, Hyperplanes, Schubert Subvarieties, Grassmann varieties, Schubert Codes}

\subjclass[2020]{Primary 14G50, 14M15, 94B27}

\date{}
\begin{abstract}   
A conjectural formula for the minimum weight of Schubert codes was conjectured by Ghorpade in 2000. This was established by Xiang in 2008. In 2018, Ghorpade and Singh provided a new proof of this conjecture. Moreover, they also conjectured that the minimum weight codewords of the Schubert codes $C_\a (\ell, m)$ is given by the so-called \emph{Schubert decomposable codewords}. We prove the validity of the conjecture for all Schubert varieties $\Omega_{\a}(\ell, V_m)$ for all but finitely many values of $q$.
\end{abstract}
\maketitle

\section{Introduction}
Let $\Fq$ be a finite field with $q$ elements, where $ q$ is a prime power. For positive integers $n$ and $k$ with $k \le n$, an $[n, k]_q$ linear code (or, simply an $[n, k]$-code) $C$ is a $k$-dimensional subspace of the vector space $\Fq^n$. The parameters $n$ and $k$ are called the length and the dimension of the code $C$, respectively.  Elements of a code are called codewords. The vector space, with usual coordinate system, can be endowed with a metric, known as the \textit{Hamming metric}, denoted by $d_H$, where for any two elements ${\bf a} = (a_1, \dots, a_n) \in \Fq^n$ and ${\bf b} = (b_1, \dots, b_n) \in \Fq^n$,  their  \emph{Hamming distance} is given by
$$d_H ({\bf a}, {\bf b}) = |\{i : a_i \neq b_i\}|.$$
Furthermore, the Hamming distance naturally gives rise to the notion of \emph{Hamming weight} of a codeword. Given an element ${\bf c} = (c_1, \dots, c_n) \in \Fq^n$, we define its \emph{Hamming weight}, denoted by $$w_H ({\bf c}) = |\{ i : c_i \neq 0\}|.$$
We define the \emph{minimum distance of a code $C$}, denoted by $d (C)$, as 
$$d(C) = \min \{d({\bf x}, {\bf y}) : {\bf x}, {\bf y} \in C,  {\bf x} \neq  {\bf y} \}.$$
Due to the linearity of the code $C$, it follows that the minimum distance of a code $C$ is equal to the \emph{minimum weight} of $C$, where the latter is defined as 
$$\min \{w_H ({\bf c}) : {\bf c} \in C, {\bf c} \neq 0\}.$$
In order to produce good codes, one often takes recourse to algebraic varieties over finite fields and codes that occur from them. One way to construct codes from algebraic varieties is to use the notion of a projective system. We omit a detailed discussion of projective systems and codes here, but refer the reader to \cite{TVN1}. 

The codes from Schubert varieties, also known as Schubert codes, were first introduced in \cite{GL} by Ghorpade and Lachaud. Let $\a = (\a_1, \dots, \a_{\ell}) \in I(\ell, m)$, as defined in \eqref{ilm}, $V_m$ a vector space of dimension $m$ over $\Fq$ with an ordered basis $\B$ and $\Omb$ be the Schubert subvariety of the Grassmannian $G(\ell, V_m)$ embedded in a nondegenerate manner in the projective space $\PP^{k_\a - 1}$ as described in Subsection \ref{sec:sch}. It turns out the length $n_\a$, dimension $k_\a$, and the minimum distance/weight $d_\a$ of the code $C_{\a} (\ell, V_m, \B)$ arising from the Schubert subvariety $\Omb$ are given by the following formulas:
\begin{enumerate}
    \item $n_\a = |C_{\a} (\ell, V_m, \B)|$. We refer to \cite[Eq. (9), (10) and Theorem 1]{GT} for combinatorial formulas describing $n_\a$. We also discuss this matter at length in Subsection \ref{subsec:en} later. 
    \item $k_\a$ is given in \cite[Theorem 7]{GT}.
    \item $d_\a = n_\a - \max \{|\Pi_\a \cap \Omb| : \Pi_\a \ \text{is a hyperplane of $\PP^{k_\a - 1}$} \} = q^{\delta (\a)}$, where $\delta (\a) = (\a_1 + \dots + \a_\ell) - \frac{\ell (\ell + 1)}{2}$. This formula was first conjectured in \cite{GL}, and subsequently proved in \cite{C} and \cite{GV} in the case $\ell = 2$, and in full generality in \cite{X}. An independent proof for this formula was also given in \cite{GS}.
\end{enumerate}
We note that when $\a = (m - \ell + 1, \dots, m)$, the Schubert subvariety $\Omb$ equals the Grassmannian $G(\ell, V_m)$. In this particular case, all the above formulas are known:
$$n_\a  = {m \brack \ell}_q, \ k_\a = {m \choose \ell} \ \text{and} \ d_\a = q^{\ell (m - \ell)}.$$
While the first two formulas are classical, the minimum distances of this particular Schubert code, namely the Grassmann code, were determined by Nogin \cite{N}. Moreover, Nogin also provided a complete classification of the minimum-weight codewords of Grassmann codes. Even though, in general, the minimum weight of the Schubert codes is known, a classification of all the minimum weight codewords is yet to be done. In \cite{GS}, the authors conjectured a classification of all the minimum weight codewords of Schubert codes (cf. Conjecture \ref{conj}). Their conjecture states that the minimum-weight codewords of Schubert codes are given by Schubert decomposable hyperplanes (cf. Definition \ref{schdec}). Ghorpade and Singh proved the conjecture in the affirmative in the case when $\ell = 2$. In this paper, we show that the conjecture is true for all positive integers $\ell$, when $q$ is sufficiently large. In achieving this, we also give an alternative proof of the formula for $d_\a$ as mentioned above.

This paper is organized as follows: In Section 2, we gather all the preliminary definitions and results that are needed to prove our main result. This includes a detailed discussion on Grassmann varieties and their Schubert subvarieties. In Section 3, we provide several combinatorial decompositions of Schubert subvarieties and discussions on the enumerations on the points of Schubert subvarieties over finite fields. Finally, in Section 4, we prove our main result which is given in Theorem \ref{main}.

\section{Preliminaries}\label{sec:prel}
In this section, we recall some of the properties of Grassmannians and their Schubert subvarieties. Our main goal is to record the well-known properties of these objects that will be used later in this paper. None of the results in this section are new. They can be found in \cite{A, HP, M, LB}, and in modern texts such as \cite{LB}. Moreover, we borrow some definitions and results in coding theory from several sources \cite{C, GL, GPP, GS, HJR, N, X}. We also refer  to \cite[Section 2]{DD} and the references therein for a more detailed understanding of the material covered in this section.

\subsection{Grassmannian, Pl\"ucker projective space, and Pl\"ucker embedding}\label{sec:gra}
Fix integers $\ell, m$ satisfying $1 \le \ell \le m$ and an $m$-dimensional vector space $V_m$ over a field $\mathbb{F}$ with an ordered basis $\mathcal{B} = \{v_1, \dots, v_m\}$ of $V_m$. 
The \emph{Grassmannian of all $\ell$-dimensional subspaces of $V_m$}, denoted by  $G (\ell, V_m)$ is defined as
$$G (\ell, V_m) :=\{ L \subset V_m : L \ \text{is a subspace of} \ V_m, \ \dim L = \ell\}.$$
Let us now
impose a geometric structure on $G(\ell, V_m)$ via the so-called \textit{Pl\"{u}cker embedding} that is given by:
\begin{equation}\label{pl1}
    \pi_{\ell, m} : G(\ell, V_m) \to \PP \left(\bigwedge^\ell V_m\right) \ \ \text{given by} \ \ L \mapsto [\omega_1 \wedge \cdots \wedge \omega_\ell],
\end{equation}
where $\{\omega_1, \dots, \omega_\ell\}$ is a basis of $L$. It is well-known that the map is well-defined, that is, it is independent of the choice of basis for the elements of $G(\ell, V_m)$. The projective space $\PP \left(\bigwedge^\ell V_m\right)$ is known as the \textit{Pl\"ucker projective space}. Let
\begin{equation}\label{ilm}I(\ell, m) := \{\a = (\a_1, \dots, \a_\ell) \in \ZZ^\ell : 1 \le \a_1 < \cdots < \a_\ell \le m\}.\end{equation}
Note that $\bigwedge^{\ell} V_m$ is a $k = {m \choose \ell}$-dimensional vector space over $\mathbb{F}$. Moreover, a fixed ordering of elements of $I(\ell,m)$ gives an ordered basis $\{v_\a = v_{\a_1} \wedge \cdots \wedge  v_{\a_\ell} : \a \in I(\ell, m)\}$ of $\bigwedge^{\ell} V_m$. This ordered basis, in turn, helps us in introducing the homogeneous coordinates of the Pl\"ucker projective space. In particular, if $L \in G(\ell, V_m)$ and $A_L = (a_{ij})$ is an $\ell \times m$ matrix with entries in $\FF$ such that the row-space of $A_L$ is $L$, then 
\begin{equation}\label{homcoor}
    \pi_{\ell, m} (L) = [p_\a (A_L)]_{\a \in I(\ell, m)}
\end{equation}
where $p_{\a} (A_L)$ is the $\a$-th minor of $A_L$.
As mentioned above, it is known (cf. \cite[Section 5.2.1]{LB}) that $\pi_{\ell, m} (L)$ is independent of the choice of $A_L$ and consequently the map $\pi_{\ell, m}$ is well-defined. Moreover, the map  $\pi_{\ell, m}$ is injective \cite[Theorem 5.2.1]{LB}.  We remark that the rows of $A_L$ above are written with respect to the fixed basis $\B = \{v_1, \dots, v_m\}$ of $V_m$. However, to our advantage, it turns out that changing the basis of $V_m$, does not affect the situation drastically. More precisely, a change of basis of $V_m$ induces a collineation of $\PP(\bigwedge^\ell V_m)$ taking $G(\ell, V_m)$ to itself. 
When $\FF$ is an algebraically closed field, it can be shown that $G(\ell, V_m)$ is given by the solutions to a system of some homogeneous quadratic equations in $X_\a$-s, known as the Plücker relations. We refer to \cite[Theorem 5.2.3]{LB} for a complete proof of this fact. Consequently, the subset $G(\ell, V_m)$ can be regarded as a projective algebraic variety when working over an algebraically closed field.

We keep working with the fixed ordered basis $\B = \{v_1, \dots, v_m\}$ of $V_m$, as mentioned above. For every $L \in G(\ell, V_m)$, We may choose a basis of $L$ in a way so that $A_L$ is in \emph{right-row-reduced-echelon} form, which is uniquely determined by $L$. That is, $L$ can be uniquely represented by an $\ell \times m$ matrix  $M_{\B}(L)$ satisfying
\begin{enumerate}
    \item[(a)] the rows of $M_{\B}(L)$ are elements of $L$ written with respect to the basis $\mathcal{B}$ of $V_m$.
    \item[(b)]  the row-space of $M_{\B}(L)$ is $L$, 
    \item[(c)] for each $i = 1, \dots, \ell$ the last non-zero entry of the $i$-th row, called the \emph{pivot of $i$-th row}, is equal to $1$,
    \item[(d)] the last non-zero entry of $(i+1)$-st row appears to the right of the last nonzero entry of the $i$-th row,
    \item[(e)] all the entries above and below of a pivot are $0$. 
\end{enumerate}
 In particular,  we have a natural one-to-one correspondence,
\begin{equation}\label{em}
    G(\ell, V_m) \longleftrightarrow \M (\ell, m) \ \ \text{given by} \ \ L \longleftrightarrow M_{\B}(L),
\end{equation}
where $\M (\ell, m)$ is the set of all $\ell \times m$ matrices in right-row-reduced echelon form with entries in $\FF$. 
Moreover, with respect to a fixed basis $\B$ of $V_m$, the map $\pi_{\ell, m}$ in \eqref{pl1} can be rewritten as 
\begin{equation}\label{pl2}
    \pi_{\ell, m}: G (\ell, V_m) \to \PP\left(\bigwedge^\ell V_m\right) \ \ \text{as} \ \ \pi_{\ell, m} (L) = [p_\a (M_{\B}(L))]_{\a \in I(\ell, m)}.
\end{equation}
This description will be used later in this article. 

\subsection{Schubert subvarieties of $G(\ell, V_m)$}\label{sec:sch} The Grassmannian $G(\ell, V_m)$ contains a special class of subvarieties, known as the \emph{Schubert subvarieties}, which are the objects of central interest in this article. For $\a \in I(\ell, m)$, the \emph{$\a$-th Schubert cell}, with respect to the basis $\B =\{v_1, \dots, v_m\}$, denoted by $C_{\a}(\ell, V_m, \B) $, 
is defined as,
$$C_{\a}(\ell, V_m, \B) := \{L \in G(\ell, V_m) : \ \text{the pivots of} \ M_{\B}(L) \ \text{are on the columns} \ \a_1, \dots, \a_\ell \}.$$
Note that $C_\a(\ell, V_m, \B)$ is identified with an affine space  of dimension $\delta (\a)$ over $\FF$, where $$\delta (\a) = (\a_1 + \dots + \a_\ell) - \frac{\ell (\ell+1)}{2}.$$
We define the partial order, also known as the Bruhat order, on the elements of $I(\ell, m)$ as follows: For $\a = (\a_1, \dots, \a_{\ell}), \beta = (\beta_1, \dots, \beta_{\ell}) \in I(\ell, m)$,  we say that 
$$\a \le \beta \iff \a_i \le \beta_i \ \ \ \text{for all} \ i \in \{1, \dots, \ell\}.$$ For $\a \in I(\ell, m)$, define $$\Delta (\a) = \{\beta \in I(\ell, m) : \beta \not\le \a\}, \ \nabla (\a) = I(\ell, m) \setminus \Delta (\a), \ \text{and} \ k_\a = |\nabla(\a)|.$$ 
The \emph{$\a$-th Schubert subvariety in} $G(\ell, V_m)$ with respect to the basis $\B$, denoted by $\Omb$, is defined as 
\begin{equation}\label{Sch}
\Omega_\a (\ell, V_m, \B):= \bigcup_{\beta \in \nabla(\a)} C_{\beta} (\ell, V_m, \B).
\end{equation}
Indeed, the union mentioned above is clearly a disjoint union of Schubert cells. Evidently, if $\a = (m - \ell + 1, \dots, m)$, then $\Omega_\a (\ell, V_m, \B) = G(\ell, V_m)$. 
For $i= 1, \dots, m$, let $V_i$ be the subspace of $V_m$ spanned by $\{v_1, \dots, v_i\}$. We see that for every $\a = (\a_1, \dots, \a_\ell) \in I(\ell, m)$, there is a partial flag of subspaces of $V_m$ given by
$$0 \subset V_{\a_1} \subset \cdots \subset V_{\a_\ell} \subset V_m \ \ \ \text{with} \ \ \ \dim V_{\a_i} = \a_i \ \ \text{for all} \ i =1, \dots, \ell.$$
Moreover, it follows rather trivially that 
$$L \in C_{\a} (\ell, V_m, \B) \iff \dim L \cap V_{\a_j} = j \ \  \text{for every} \ j = 1, \dots, \ell,$$ while  $$L \in \Omega_{\a} (\ell, V_m, \B) \iff \dim L \cap V_{\a_j} \ge j \ \  \text{for every} \ j = 1, \dots, \ell.$$
In literature, several authors define a Schubert subvariety $\Omb$ in terms of the above equivalence condition. 
The Pl\"ucker embedding $\pi_{\ell, m}$ restricted to $\Omega_\a (\ell, V_m, \B)$ allows us to view the Schubert variety as a subset of the projective space $\PP\left(\bigwedge^{\ell} V_m\right)$. As mentioned above, with respect to the basis $\B$ and a fixed ordering of the elements of $I(\ell, m)$, the projective space $\PP^{k-1} = \PP\left(\bigwedge^{\ell} V_m\right)$ is endowed with the homogeneous coordinates $(X_{\a})_{\a \in I(\ell, m)}$. We define the projective linear subspace $\PP^{k_\a - 1}$ of $\PP\left(\bigwedge^{\ell} V_m\right)$ by 
\begin{equation}\label{pkalpha}
\PP^{k_\a - 1} := \left\{ P \in \PP\left(\bigwedge^{\ell} V_m\right) : X_\beta (P) = 0 \ \text{for all} \ \beta \in \Delta (\a)\right\}.
\end{equation}
It is well-known that 
\begin{equation}\label{commd1}
    \pi_{\ell, m} (\Omega_\a (\ell, V_m, \B))  = G(\ell, V_m) \cap \PP^{k_\a - 1},
\end{equation}
where $\PP^{k_\a - 1}$ is as defined above. As with the Grassmann varieties, we will denote by $\Omega_\a (\ell, V_m, \B)$, the Plücker embedding of $\Omega_\a (\ell, V_m, \B)$ in $\PP^{k_\a - 1}$.
We see that the $\a$-th Schubert variety $\Omb$ can be geometrically understood as a linear section of the Grassmann variety $G(\ell, V_m)$ given by some coordinate hyperplanes of the Pl\"ucker space.

\subsection{Hyperplanes, their decomposability, and Schubert decomposability}\label{sec:hyp}  
One of our main goals in this article is to study the maximum number of points lying on a \emph{hyperplane} section of a Schubert variety $\Omega_\a (\ell, V_m, \B)$ defined over a finite field. That is, we want to determine the quantity
$$\max \{|\Pi_\a \cap \Omega_\a (\ell, V_m, \B)|: \Pi_\a \ \text{is a hyperplane of} \ \PP^{k_\a - 1} \},$$
when all the subsets mentioned above are defined over a finite field $\Fq$. 
To this end, it is imperative to first understand the algebraic and geometric properties of hyperplanes in the Plücker space. We will briefly recall the definitions and the properties of them in this subsection, but refer the reader to \cite[Section 2.3]{DD} and the references therein for more leisurely reading. 

The hyperplanes of the Pl\"ucker space $\PP(\bigwedge^\ell V_m)$ can be represented in two ways. On one hand, they are given by elements of the dual vector space $(\bigwedge^\ell V_m)^*$ which is known to be isomorphic to $(\bigwedge^{m -\ell} V_m)$. Thus a hyperplane of $\PP(\bigwedge^\ell V_m)$ can be defined by an element $F$, given by 
$$F = \sum_{\a \in I(m - \ell, m)} c_\a v_{\a_1} \wedge \cdots\wedge v_{\a_{m - \ell}} \in \bigwedge^{m-\ell} V_m.$$
As usual, a point $L \in G(\ell, V_m)$ lies on the hyperplane given by $F$ if and only if $F(L) =0$.
Here
$$F(L) = \sum_{\a \in I(m - \ell, m)} c_\a v_{\a_1} \wedge \cdots\wedge v_{\a_{m - \ell}} \wedge (w_1 \wedge \dots \wedge w_\ell),$$
where $\{w_1, \dots, w_\ell\}$ is an ordered basis of $L$. On the other hand, we may also describe a hyperplane of $\PP(\bigwedge^\ell V_m)$ geometrically in terms of the homogeneous coordinates $X_\a$ as mentioned earlier. Here, we may represent a hyperplane as the vanishing set of a homogeneous linear polynomial 
$$\sum_{\a \in I(\ell, m)} c_\a X_\a.$$
It is understood that, for $\a = (\a_1, \dots, \a_\ell)$ the coordinate hyperplane given by $X_\a$ corresponds to the hyperplane given by the element $v_{\a^{\mathsf{C}}} = v_{\a'_1} \wedge \cdots \wedge v_{\a'_{m-\ell}} \in \bigwedge^{m-\ell} V_m$, where $\{\a'_1, \dots, \a'_{m-\ell}\} \cup \{\a_1, \dots, \a_\ell\} = \{1, \dots, m\}$. This correspondence naturally extends to the following identification 
\begin{equation}\label{hypid}
    \sum_{\a \in I (\ell, m)} c_\a X_{\a} \longleftrightarrow \sum_{\a \in I(\ell, m)} c_\a v_{\a^{\mathsf{C}}},
\end{equation}
where for each $\a = (\a_1, \dots, \a_\ell) \in I(\ell, m)$, the element $\a^{\mathsf{C}} = (\a'_1, \dots, \a'_{m-\ell}) \in I(m - \ell, m)$ satisfying $\{\a'_1, \dots, \a'_{m-\ell}\} \cup \{\a_1, \dots, \a_\ell\} = \{1, \dots, m\}$.
The non-degeneracy of the Grassmannian $G(\ell, V_m)$ in $\PP(\bigwedge^\ell V_m)$ implies that if $\sum_{\a \in I(\ell, m)} c_\a X_\a (L) =0$ for all $L \in G(\ell, V_m)$, then $c_\a = 0$ for all $\a \in I(\ell, m)$. 

Let us now consider the $\a$-th Schubert subvariety $\Omb$. We recall that $\Omega_\a (\ell, V_m, \B) = G(\ell, V_m) \cap \PP^{k_\a - 1}$, where $\PP^{k_{\a} - 1}$ is the linear subspace of $\PP^{k-1}$ as given in \eqref{pkalpha}. Take a hyperplane $\Pi$ of $\PP (\bigwedge^{\ell} V_m)$ given by $F (X_\a : \a \in I(\ell, m)) = \sum_{\beta \in I(\ell, m)} c_\beta X_\beta$. We may write 
$$F = F_{\Delta (\a)} + F_{\nabla (\a)}, \ \ \text{where} \ \ F_{\Delta (\a)} = \sum_{\beta \in \Delta (\a)} c_\beta X_\beta \ \ \text{and} \ \ F_{\nabla (\a)} = \sum_{\beta \in \nabla (\a)} c_\beta X_\beta.$$
We see that a hyperplane $\Pi$ restricts to a hyperplane $\Pi_\a$ in $\PP^{k_\a - 1}$ if $F_{\nabla (\a)} \neq 0$.  Indeed the restricted hyperplane $\Pi_\a$ is defined as the set of zeroes of $F_{\nabla (\a)}$ in $\PP^{k_\a - 1}$. 
Moreover, it is well-known (cf. \cite[Remark 5.3.4]{LB}) that the $\a$-th Schubert variety $\Omb$ of $G(\ell, V_m)$ is also a non-degenerate subset of $\PP^{k_\a - 1}$. Consequently, the hyperplane $\Pi$ described above contains the Schubert variety $\Omb$ if and only if $F_{\nabla (\a)} = 0$. While a hyperplane of the Pl\"ucker space restricts \emph{uniquely} to a hyperplane of $\PP^{k_\a - 1}$, a hyperplane of $\PP^{k_\a - 1}$ can be \emph{extended} to a hyperplane of the Pl\"ucker space in several ways. That is, if $\Pi_\a$ is a hyperplane of $\PP^{k_\a - 1}$ given by the vanishing set of $F_\a = \sum_{\beta \in \nabla (\a)} c_\a X_\a$, then one can construct a hyperplane $\Pi$ of $\PP (\bigwedge^\ell V_m)$ given by $F = F_\a + \sum_{\gamma \in \Delta (\a)} c_\gamma X_\gamma$ such that $\Pi \cap \PP^{k_\a - 1} = \Pi_\a$. One readily checks that the choice of $\Pi$ mentioned above is not unique. A few remarks are in order and we summarize them below.

\begin{remark}\label{reduction}\normalfont
    \
    \begin{enumerate}
        \item[(a)] Our notation $C_\a (\ell, V_m, \B)$ to define the $\a$-th Schubert variety, emphasizing its dependence on the basis $\B$, while letting $G(\ell, V_m)$ denote the Grassmannian independent of basis might lead to some confusion. As mentioned earlier, we see that when we change the basis of $V_m$, it only induces a projective linear isomorphism of the Pl\"ucker space that fixes $G(\ell, V_m)$ as a set. It is understood, of course, that the homogeneous coordinates of the Pl\"ucker space get changed. However, for two bases $\B$ and $\B'$ of $V_m$, the sets $\Omega_\a (\ell, V_m, \B)$ and $\Omega_\a (\ell, V_m, \B')$ can be different as subsets. 
        \item[(b)] As before, let $\B =\{v_1, \dots, v_m\}$ be a fixed basis of $V_m$ and for each $i=1, \dots, m$, let $V_i$ denote the subspace spanned by $\{v_1, \dots, v_i\}$. It is easily seen that $\Omega_\a (\ell, V_m, \B) = \Omega_\a (\ell, V_{\a_\ell}, \B_{\a_\ell})$ where $\B_{\a_\ell} = \{v_1, \dots, v_{\a_\ell}\}$. In what follows, we shall use this identity. Indeed, this means we can view $\Omega_{\a} (\ell, V_m, \B)$ as a subset of $G(\ell, V_{\a_\ell})$. From now on, when we mention the Schubert subvariety $\Omega_\a (\ell, V_m, \B)$, we will always assume that $\a_\ell = m$ and that $V_{\a_\ell} = V_m$. 
    \end{enumerate}
\end{remark}

A nonzero element $z \in \bigwedge^{m- \ell} V_m$ is said to be \emph{decomposable} if there exist $w_1, \dots, w_{m-\ell} \in V_m$ such that $z = w_1 \wedge \cdots \wedge w_{m-\ell}$. A hyperplane of $\PP\left(\bigwedge^\ell V_m\right)$ is said to be \emph{decomposable} if it is given by a decomposable element of $\bigwedge^{m- \ell} V_m$. Thus a hyperplane $\Pi$ of ${\PP\left(\bigwedge^{\ell} V_m\right)}$ given by the equation $\displaystyle{\sum_{\a \in I(\ell, m)}} c_\a X_\a$ is decomposable if $\displaystyle{\sum_{\a \in I(\ell, m)} c_\a v_{\a^{\mathsf{C}}}}$ is a decomposable element of $\bigwedge^{m - \ell} V_m$. 

\begin{remark}\label{rem1:dec}\normalfont
    For a nonzero element $z \in \bigwedge^{m - \ell} V_m$, we define $\V (z) = \{x \in V_m : z \wedge x = 0\}$. We have
    \begin{enumerate}
        \item[(a)] \cite[Section 4.1, Theorem 1.1]{M}  $z$ is decomposable if and only if $\dim \V (z) = m - \ell$. 
        \item[(b)] \cite[Section 4.1, Theorem 1.3]{M} if $\ell = 1$, then $z$ is decomposable. 
    \end{enumerate}
 \end{remark}

We now recall a suitable generalization of the notion of decomposable hyperplanes in the setting of Schubert subvarieties as given in \cite[Definition 2.1]{GS}.
\begin{definition}[Schubert Decomposability, \cite{GS}]\label{schdec}\normalfont
Suppose $\a \in I(\ell, m)$ and $\Omb$ be the $\a$-th Schubert subvariety of $G(\ell, V_m)$ with respect to the fixed basis $\{v_1, \dots, v_m\}$. 
We divide $\a$ into \emph{blocks of consecutive components} as follows:
$$(\a_1, \dots, \a_\ell) = (\a_1, \dots, \a_{p_1}, \a_{p_1 + 1}, \dots, \a_{p_2}, \dots, \a_{p_u +1}, \dots, \a_\ell),$$
where $u, p_0, \dots, p_{u+1}$ are unique integers determined by $\a$ satisfying
\begin{enumerate}
    \item[(a)] $p_0 := 0 < p_1 < \dots < p_u < p_{u+1} =:\ell$,
    \item[(b)] $\a_{p_i + 1} - \a_{p_i} \ge 2$ for $1 \le i \le u$, and
    \item[(c)] $\a_{p_i - j} = \a_{p_i} - j$ for $1 \le i \le u+1$ and $1 \le j <p_i - p_{i-1}$.
\end{enumerate}
This allows us to consider the \emph{refined partial flag} of subspaces associated with $\a$. That is, we now consider the partial flag:
$$0 \subset V_{\a_{p_1}} \subset \cdots \subset V_{\a_{p_u}} \subset V_m \ \ \ \text{where} \ \ V_{\a_i} = \text{span} \{v_1, \dots, v_{\a_i}\} \ \text{for} \ i=p_1, \dots, p_u.$$

An element $z \in \bigwedge^{m-\ell} V_m$ is said to be \emph{Schubert decomposable with respect to} $\Omb$ if $z$ is decomposable and $\dim (\V (z) \cap V_{\a_{p_i}}) = \a_{p_i} - p_i$ for all $i = 1, \dots, u$. A hyperplane $\Pi_\a$ of $\mathbb{P}^{k_{\a}-1}$ given by $F_\a$ is said to be a \emph{Schubert decomposable hyperplane} if there exists a Schubert decomposable element $F \in \bigwedge^{m-\ell} V_m$ such that $F_{\nabla (\a)} = F_\a$ and the hyperplane $\Pi$ defined by $F$ in $\PP^{k-1}$ restricts to $\Pi_\a$ in $\PP^{k_\a - 1}$.
\end{definition}

\section{Schubert varieties over finite fields: a decomposition into ``smaller" Schubert varieties, and a combinatorial upper bound}
From now onwards, we will denote by $\Fq$ a finite field with $q$ elements where $q$ is a fixed prime power. In this section, we will focus our attention to the study of Grassmannians and their Schubert subvarieties over $\Fq$. One of our main goals in this paper is to answer the following question:

\begin{question}\label{Q1}
Let $\ell, m$ be positive integers satisfying $1 \le \ell \le m$. Let $\a \in I(\ell, m)$, $V_{m}$ be an $m$-dimensional vector space over $\Fq$ with a fixed ordered basis $\B$,  $G(\ell, V_m)$  the Grassmannian of all $\ell$ dimensional subspaces of $V_m$ and $\Omega_\a (\ell, V_m, \B)$ be the Schubert subvariety of $G(\ell, V_m)$ with respect to $\B$. 
\begin{enumerate}
    \item[(a)] Determine $e_\a (\ell, m) := \max \{ |\Pi \cap \Omega_{\a} (\ell, V_m, \B)| : \Pi \ \text{is a hyperplane of} \ \PP^{k_\a -1}\}$. 
    \item[(b)] Classify all the hyperplanes $\Pi$ of $\PP^{k_\a - 1}$ such that $|\Pi \cap \Omega_{\a} (\ell, V_m, \B)| = e_\a (\ell, m)$.
\end{enumerate}
\end{question}

Indeed, the quantity $e_\a (\ell, m)$ is independent of the choice of basis of $V_m$, and consequently, there is no scope for ambiguity with a basis-independent notation being used for the same. In the special case when $\Omb = G(\ell, V_m)$ the answer to Question \ref{Q1} is known due to Nogin \cite{N}. 
The answer to Question \ref{Q1} (a) is known thanks to \cite{X} and \cite{GS}. In fact, we have the following:
\begin{theorem}\cite[Theorem 2]{X}, \cite[Theorem 3.6]{GS}\label{thm}
    We have $e_\a (\ell, m) = |\Omb| - q^{\delta (\a)}$. 
\end{theorem}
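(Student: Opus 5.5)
We prove that the minimum weight of the Schubert code is $q^{\delta(\a)}$, i.e.\ $|\Omb \setminus \Pi| \ge q^{\delta(\a)}$ for every hyperplane $\Pi$ of $\PP^{k_\a-1}$, with equality attained. The argument is an induction on $\ell$ (and on $m$), built on a decomposition of $\Omb$ into smaller Schubert varieties. By Remark~\ref{reduction}(b) we assume $\a_\ell = m$.

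\emph{Attainability.} The coordinate hyperplane $X_\a = 0$ meets $\Omb$ exactly in $\Omb \setminus C_\a(\ell, V_m, \B)$. This holds because on the cell $C_\a$ the minor $p_\a$ of the echelon matrix equals $1$. On every smaller cell $C_\beta$ with $\beta < \a$, some column of $\a$ lies beyond the last pivot in its row-range, which forces $p_\a = 0$. The complement is therefore the big cell, an affine space of size $q^{\delta(\a)}$. This gives $e_\a(\ell,m) \ge |\Omb| - q^{\delta(\a)}$.

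\emph{Lower bound on the complement.} Let $F = \sum_{\beta \in \nabla(\a)} c_\beta X_\beta$ define $\Pi$, so that some $c_\beta \ne 0$. We stratify $\Omb$ by the intersection with the hyperplane $V_{m-1}$, writing $\Omb = \Omega_{\a'}(\ell, V_{m-1}, \B') \sqcup \bigl(\Omb \setminus G(\ell, V_{m-1})\bigr)$. The second piece consists of those $L$ whose last pivot lies in column $m$. Each such $L$ is determined by $L \cap V_{m-1} \in \Omega_{\widecheck{\a}}(\ell-1, V_{m-1}, \B')$ together with a vector $w$ with last coordinate $1$, taken modulo $L \cap V_{m-1}$; this gives $q^{m-\ell}$ choices over each point of $\Omega_{\widecheck{\a}}$. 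Expanding the Plücker coordinates along the last row, we get $F(L) = G_w(L')$ with $L' = L \cap V_{m-1}$. Here $G_w$ is a linear form in the Plücker coordinates of $L'$ in $\bigwedge^{\ell-1} V_{m-1}$, and it depends affinely on $w$: it equals $F$ contracted by $v_m + w_0$.

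Split into cases according to whether the part of $F$ involving indices $\beta$ with $\beta_\ell = m$ is nonzero.

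\begin{itemize}
\item If this part is nonzero, then for each $w$ the restriction of $G_w$ to $\Omega_{\widecheck{\a}}$ is a nonzero hyperplane section. Either $G_w$ is nonzero for every $w$, in which case induction on $\ell$ gives at least $q^{\delta(\widecheck\a)}$ nonzeros for each of the $q^{m-\ell}$ choices. Or, exploiting the affine dependence on $w$, we count pairs $(L', w)$ directly: for fixed $L'$ with some nonvanishing coefficient, $w \mapsto G_w(L')$ is a nonconstant affine function, so it is nonzero on a $(1 - 1/q)$ fraction of the $w$.
\item If this part is zero, then $F$ lives on $\Omega_{\a'}(\ell, V_{m-1})$, and induction on $m$ applies to that piece.
\end{itemize}

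In either case we obtain at least $q^{\delta(\widecheck\a) + m - \ell} = q^{\delta(\a)}$ nonzeros, using $\delta(\a) = \delta(\widecheck\a) + (m - \ell)$ with $\widecheck\a = (\a_1, \dots, \a_{\ell-1})$. The base cases are $\ell = 1$, where the variety is a projective space and a hyperplane misses $q^{m-1}$ points, and $\Omega = G(\ell, V_m)$, which is Nogin's case.

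\emph{Main obstacle.} The mixed case is the hard step: $F$ has both components, yet $G_w$ vanishes identically on $\Omega_{\widecheck\a}$ for some $w$. In that situation one must show that the deficit on those fibres is compensated by nonzeros from the $\Omega_{\a'}$ part or from other $w$. We would handle it with the averaging argument: summing over $w$, the number of nonzeros is at least $(q-1)q^{m-\ell-1}$ times the number of $L'$ at which the linear part is nonzero. Then, by the induction hypothesis applied to a suitable sub-Schubert variety of $V_{m-1}$, we bound that number below by $q^{\delta(\widecheck\a)}\cdot q/(q-1)$, or alternatively add in the contribution from $\Omega_{\a'}$.
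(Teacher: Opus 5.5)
Your attainability argument is correct: on $C_\a(\ell,V_m,\B)$ the $\a$-minor is $1$, and on every smaller cell it vanishes. The branch in which $G_w\not\equiv 0$ on $\Ombh$ for every $w$ is also correct, provided you average over all $w_0\in V_{m-1}$ rather than speaking of ``each of the $q^{m-\ell}$ choices''. The classes modulo $L'$ change with $L'$, and each class is counted $q^{\ell-1}$ times, which gives $q^{m-1+\delta(\widecheck{\a})}/q^{\ell-1}=q^{\delta(\a)}$.

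The gap is in the remaining cases. When the $\beta_\ell=m$ part of $F$ is zero, induction on $m$ applied to $\Ombb$ gives only $q^{\delta(\a')}=q^{\delta(\a)-(\ell-k)}$ non-zeros, which is short by $q^{\ell-k}$. You never count the non-zeros on the strings, and that is where most of them lie. For $\a=(1,3)$ and $F=X_{(1,2)}$, the piece $\Ombb=\{V_2\}$ contributes $1$ non-zero and the strings contribute $q-1$. In the mixed case, contrary to your first bullet, $G_w$ can vanish on $\Ombh$: take $\a=(1,3)$, $F=X_{(1,2)}+X_{(1,3)}$, $w=-v_2+v_3$. The bound you propose there, $\#\{L'\}\ge q^{\delta(\widecheck{\a})}\cdot q/(q-1)$, is false: in this example $\#\{L'\}\le|\Ombh|=1$. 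The ``suitable sub-Schubert variety'' is also never specified. So the lower bound $|\Omb\setminus\Pi|\ge q^{\delta(\a)}$ is not established.

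The missing idea handles both cases at once. Suppose $G_{w^*}\equiv 0$ on $\Ombh$ for some $w^*=w_0^*+v_m$; your case B is $w^*=v_m$. Write $w=w^*+u$ with $u\in V_{m-1}$. Then $F(p(L')\wedge w)=F(p(L')\wedge u)$, which is $0$ if $u\in L'$ and otherwise is the value of $F$ at $L''=L'+\langle u\rangle$.
\begin{itemize}
\item For subspaces of $V_{m-1}$, only the conditions with $i\le k$ are non-trivial, both for $\Ombh$ and for $\Ombb$. Hence $L''\in\Ombb$.
\item Each $L''\in\Ombb\setminus\Pi$ arises from exactly $(q-1)N(L'')$ pairs $(L',u+L')$, where $N(L'')$ is the number of hyperplanes of $L''$ lying in $\Ombh$.
\item Every hyperplane of $L''$ containing the span of its first $k$ echelon rows qualifies, so $N(L'')\ge (q^{\ell-k}-1)/(q-1)$.
\end{itemize}
Hence $|\Omb\setminus\Pi|\ge q^{\ell-k}\,|\Ombb\setminus\Pi|$. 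Moreover $F$ cannot vanish on $\Ombb$, since it would then vanish on all of $\Omb$. Induction on $m$ therefore gives $q^{\ell-k+\delta(\a')}=q^{\delta(\a)}$.

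Completed this way, your route genuinely differs from the paper's. Theorem~\ref{main} double counts over all hyperplanes $W_{m-1}\supset V_{\a_k}$ (Lemma~\ref{dc}) and needs Lemma~\ref{aux}, hence requires $q>q_0(\ell)$. Your fixed-$V_{m-1}$ argument works for every $q$, matching the generality of the results of Xiang and Ghorpade--Singh that the paper cites.
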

As mentioned in the Introduction, the above theorem was proved in the special case when $\ell =2$ in \cite{C} and independently in \cite{GV}. We have the following conjecture \cite[Conjecture 5.6]{GS} which we will refer to as the \emph{minimum weight codewords conjecture} or \emph{MWCC} in  this article. 

\begin{conjecture}[MWCC, \cite{GS}]\label{conj} 
    If a hyperplane $\Pi$ satisfies $|\Pi \cap \Omega_{\a} (\ell, V_m, \B)| = e_\a (\ell, m)$, then $\Pi$ is Schubert decomposable. 
\end{conjecture}

\begin{remark}\label{where}\normalfont
The conjecture is known to be true in the following special cases: 
\begin{enumerate}
    \item[(a)] \cite[Theorem 5.5]{GS} If $\Pi$ is decomposable, that is, $\Pi$ extends to a decomposable hyperplane in $\PP(\bigwedge^\ell V_m)$;
    \item[(b)] \cite[Theorem 6.1 and Corollary 6.2]{GS} If $\a_1, \dots, \a_\ell$ are completely non-consecutive;
    \item[(c)] $\a_1, \dots, \a_\ell$ are consecutive: in fact, from the discussion in Remark \ref{reduction}, the corresponding Schubert variety is a Grassmannian $G(\ell, V_m)$;
    \item[(d)] \cite[Corollary 6.3]{GS} In particular, if $\ell = 2$.
\end{enumerate}
\end{remark}
It is also known \cite[Theorem 5.3]{GS} that if $\Pi$ is Schubert decomposable, then $|\Pi \cap \Omega_{\a} (\ell, V_m, \B)| = e_\a (\ell, m)$. 

\subsection{Enumeration of Schubert varieties over finite fields}\label{subsec:en}
The first question one would encounter is the following: How many points are there on a Schubert variety $\Omb$ defined over a finite field $\Fq$? Indeed, if $\a = (m - \ell+1, \dots, m)$, then $\Omb = G(\ell, V_m)$ and its cardinality is given by the well-known \emph{Gaussian Binomial coefficient} ${m \brack\ell}_q$ defined as follows:
$${m \brack \ell}_q := \frac{(q^m -1) \cdots (q^m - q^{\ell -1})}{(q^\ell -1) \cdots (q^\ell - q^{\ell -1})}.$$
The Gaussian binomials are well-studied in mathematics, especially in combinatorics. It is beyond the scope of the article to discuss several nice properties of them, but we refer the readers to \cite{A} for a more leisurely reading on this topic. 
We now look at the general cases of Schubert varieties $\Omb$. For this, let us look at the equation \eqref{Sch}. From this, we already know the following:
\begin{equation}\label{schen}
    |\Omb| = \sum_{\beta \in \nabla (\a)} |C_\beta (\ell, V_m, \B)| = \sum_{\beta \in \nabla(\a)} q^{\delta(\beta)} \ \ \text{where} \ \ \delta(\beta) = \sum_{i=1}^\ell \beta_i - \frac{\ell (\ell + 1)}{2}.
\end{equation}
The last equality above follows from \eqref{Sch} and the well-known fact that $C_\beta (\ell, V_m, \B)$ is an affine space of dimension $\delta (\beta)$ over $\Fq$. This fact is easy to verify directly from looking at the matrices in right-row-reduced-echelon form that belong to $C_\beta (\ell, V_m, \B)$. It is difficult to trace back in the literature to find the first proof of the same, but we refer to \cite[Theorem 1]{GT} and the references therein.  To that end, 
Now we present a rather weak upper bound for the number of points on $\Omb$. We begin with a Lemma. 

\begin{lemma}\label{lem:ineq}
    For $\ell \ge 2$ and $\a \in I(\ell, m)$, we have 
    $$\sum_{\beta \in \nabla (\a)} q^{\beta_1 + \dots + \beta_\ell} < \left(\frac{q}{q-1}\right)^\ell q^{\a_1 + \dots + \a_\ell}.$$
\end{lemma}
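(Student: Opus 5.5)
The plan is to drop the strict increase condition and bound the sum by a product of geometric series. Every $\beta \in \nabla(\a)$ satisfies $1 \le \beta_i \le \a_i$ for each $i$, so $\nabla(\a)$ embeds into the box $\prod_{i=1}^{\ell} \{1, \dots, \a_i\}$. Since all terms $q^{\beta_1 + \dots + \beta_\ell}$ are positive, this gives
$$\sum_{\beta \in \nabla(\a)} q^{\beta_1+\dots+\beta_\ell} \le \prod_{i=1}^{\ell} \sum_{b=1}^{\a_i} q^{b} = \prod_{i=1}^{\ell} \frac{q(q^{\a_i}-1)}{q-1} < \prod_{i=1}^{\ell} \frac{q^{\a_i+1}}{q-1} = \left(\frac{q}{q-1}\right)^{\ell} q^{\a_1+\dots+\a_\ell}.$$
Here the strict inequality in the last step holds because $q^{\a_i} - 1 < q^{\a_i}$ for each factor.

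The one point to watch is that no step uses $\ell \ge 2$. The strictness comes from the geometric-series step alone, and the box containment is in fact proper when $\ell \ge 2$ (for instance $(1,1,\dots)$ lies in the box but not in $\nabla(\a)$), which gives strictness a second time. So the hypothesis $\ell \ge 2$ is harmless and needs no separate treatment. I expect no real obstacle: the only choices are to replace $\nabla(\a)$ by the full box and to bound each geometric sum by $q^{\a_i+1}/(q-1)$.
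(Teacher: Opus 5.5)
Your proof is correct. The paper instead inducts on $\ell$, with base case $\ell=2$. At each step it sums out the last coordinate first, using
\[
\sum_{\beta_\ell=\beta_{\ell-1}+1}^{\a_\ell} q^{\beta_\ell} \;=\; q^{\beta_{\ell-1}+1}\,\frac{q^{\a_\ell-\beta_{\ell-1}}-1}{q-1} \;<\; \frac{q^{\a_\ell+1}}{q-1},
\]
and then applies the induction hypothesis to $(\a_1,\dots,\a_{\ell-1})$. The right-hand bound does not depend on $\beta_{\ell-1}$, so it discards the coupling $\beta_\ell>\beta_{\ell-1}$. In effect the paper applies your per-coordinate geometric-series estimate one coordinate at a time. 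Your box argument collapses that induction into a single product and needs no separate base case.

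It also shows that the hypothesis $\ell\ge 2$ is unnecessary, and this is worth having. In Case 2 of the proof of Theorem~\ref{ineq:prel}, the paper applies Proposition~\ref{upp} to $\Omega_{\widecheck{\a}}(\ell-1,V_{m-1},\B')$. When $\ell=2$ this is the case $\ell-1=1$, which the lemma as stated does not cover, but which your argument handles.
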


\begin{proof}
    We prove this assertion by induction on $\ell$. We first look at the case when $\ell = 2$. Fix $\a = (\a_1, \a_2) \in I(2, m)$. We have
    \begin{align*}
        \sum_{(\beta_1, \beta_2) \in \nabla (\a)} q^{\beta_1 + \beta_2} &= \sum_{\beta_1 = 1}^{\a_1} \sum_{\beta_2 = \beta_1 + 1}^{\a_2} q^{\beta_1 + \beta_2} \\
        &= \sum_{\beta_1 = 1}^{\a_1} q^{\beta_1} \left(\sum_{\beta_2 = \beta_1 + 1}^{\a_2} q^{\beta_2}\right) \\
        &= \sum_{\beta_1 = 1}^{\a_1} q^{\beta_1} \left(q^{\beta_1 + 1} \frac{q^{\a_2 - \beta_1} -1}{q-1} \right) \\
        &< \frac{1}{q-1} \sum_{\beta_1 = 1}^{\a_1} q^{\beta_1} q^{\beta_1 + 1} q^{\a_2 - \beta_1} \\
        &= \frac{q^{\a_2 + 2}}{q-1} \sum_{\beta_1 = 1}^{\a_1} q^{\beta_1 - 1} \\
        &= \frac{q^{\a_2 + 2}}{q-1} \frac{q^{\a_1} -1}{q-1} < q^{\a_1 + \a_2} \left(\frac{q}{q-1}\right)^2.
    \end{align*}
Now assume that $\ell \ge 3$ and the assertion is true for all every $\a \in I(\ell - 1, m)$. Fix $\a = (\a_1, \dots, \a_\ell) \in I(\ell, m)$. We have
\begin{align*}
        \sum_{(\beta_1, \dots, \beta_\ell) \in \nabla (\a)} q^{\beta_1 + \dots + \beta_\ell}
        &= \sum_{(\beta_1, \dots, \beta_{\ell -1}) \in \nabla (\a_1, \dots, \a_{\ell - 1})}  q^{\beta_1 + \dots + \beta_{\ell -1} } \left( \sum_{\beta_\ell = \beta_{\ell -1} + 1}^{\a_\ell} q^{\beta_{\ell}} \right) \\
        &= \sum_{(\beta_1, \dots, \beta_{\ell -1}) \in \nabla (\a_1, \dots, \a_{\ell - 1})} q^{\beta_1 + \dots + \beta_{\ell -1} } q^{\beta_{\ell - 1} + 1} \left( \frac{q^{\a_\ell - \beta_{\ell - 1}} - 1}{q-1} \right)  \\
        &< \sum_{(\beta_1, \dots, \beta_{\ell -1}) \in \nabla (\a_1, \dots, \a_{\ell - 1})} q^{\beta_1 + \dots + \beta_{\ell -1} } q^{\beta_{\ell - 1} + 1} \left(\frac{q^{\a_\ell - \beta_{\ell - 1}}}{q-1}\right)  \\
        &= q^{\a_{\ell}} \frac{q}{q-1} \sum_{(\beta_1, \dots, \beta_{\ell -1}) \in \nabla (\a_1, \dots, \a_{\ell - 1})} q^{\beta_1 + \dots + \beta_{\ell -1} }  \\
        &< q^{\a_{\ell}} \frac{q}{q-1} q^{\a_1 + \dots + \a_{\ell - 1}} \left(\frac{q}{q-1}\right)^{\ell-1} = \left(\frac{q}{q-1}\right)^\ell q^{\a_1 + \dots + \a_\ell}.
    \end{align*}
    Here the strict inequality in the penultimate step follows from the induction hypothesis. This completes the proof. 
\end{proof}

\begin{proposition}\label{upp}
    We have $|\Omb| < \left(\frac{q}{q-1}\right)^\ell q^{\delta(\a)}$. 
\end{proposition}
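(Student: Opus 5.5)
We derive this directly from the point count \eqref{schen} together with Lemma \ref{lem:ineq}. By \eqref{schen},
$$|\Omb| = \sum_{\beta \in \nabla(\a)} q^{\delta(\beta)} = q^{-\ell(\ell+1)/2} \sum_{\beta \in \nabla(\a)} q^{\beta_1 + \dots + \beta_\ell}.$$
The factor $q^{-\ell(\ell+1)/2}$ is the same for every $\beta \in \nabla(\a)$, so it can be taken outside the sum.

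For $\ell \ge 2$, we apply Lemma \ref{lem:ineq} to the remaining sum and obtain
$$|\Omb| < q^{-\ell(\ell+1)/2}\left(\frac{q}{q-1}\right)^\ell q^{\a_1+\dots+\a_\ell} = \left(\frac{q}{q-1}\right)^\ell q^{\delta(\a)},$$
which is the desired bound.

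The case $\ell = 1$ is not covered by the lemma and must be checked by hand. Here $\nabla(\a) = \{(\beta_1) : 1 \le \beta_1 \le \a_1\}$ and $\delta(\beta) = \beta_1 - 1$. Hence
$$|\Omb| = \sum_{j=0}^{\a_1-1} q^j = \frac{q^{\a_1}-1}{q-1} < \frac{q^{\a_1}}{q-1} = \frac{q}{q-1}\, q^{\a_1-1} = \frac{q}{q-1}\, q^{\delta(\a)}.$$
Alternatively, the base-step computation in the proof of Lemma \ref{lem:ineq} extends verbatim to $\ell = 1$.

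There is no real obstacle here. The only point needing care is that Lemma \ref{lem:ineq} is stated for $\ell \ge 2$ only, so the case $\ell = 1$ has to be handled separately as above.
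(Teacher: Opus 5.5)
Your proof is correct and follows the paper's own argument: factor the constant $q^{-\ell(\ell+1)/2}$ out of the cell count \eqref{schen} and apply Lemma~\ref{lem:ineq}. Your separate check of $\ell = 1$ fills a small gap the paper leaves, since Lemma~\ref{lem:ineq} is stated only for $\ell \ge 2$, yet the proof of Theorem~\ref{ineq:prel} applies this proposition to $\Ombh$, which for $\ell = 2$ is the case $\ell - 1 = 1$.
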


\begin{proof}
    Follows directly from Lemma \ref{lem:ineq}, \eqref{schen}, and the definition $\delta (\a) = \a_1 + \dots + \a_\ell - \frac{\ell(\ell+1)}{2}$ for all $\a \in I(\ell, m)$.
\end{proof}

\begin{remark}\normalfont
\
\begin{enumerate}
\item[(a)] In order to completely understand the number of points on $\Omb$, one may write:
\begin{equation}\label{schen1}
|\Omega_\a (\ell, V_m, \B)| = \sum_{i=0}^{\delta (\a)} a_i q^i.
\end{equation}
The integers $a_i$ equals the number of Schubert cells of dimension $i$ in $\Omega_\a (\ell, V_m, \B)$. It is in general not difficult to determine these quantities. However, it is conceivable that determination of $a_i$-s will give rise to a better bound for $|\Omb|$.
\item[(b)] A bound analogous to Proposition \ref{upp} for Gaussian binomials can be found in the literature. For example, it follows from \cite[Lemma 3.5]{NP} that ${m \brack \ell}_q < \frac{q^2}{q^2 - q - 1} q^{\ell (m-\ell)}$. Indeed, in the special case when the concerned Schubert subvariety is a Grassmannian, we see that the above bound is stronger in general, while they coincide with our bound when $q = 2$. We believe that the bound we presented above can be improved and leave this for future investigations. 
    \end{enumerate} 
\end{remark}

\subsection{A combinatorial bound on Schubert subvarieties}
As noted above, it is not always easy to explicitly determine the coefficients $a_i$ as presented in equation \eqref{schen1}. Here, our goal is to bypass the determination of these quantities while still obtaining some effective combinatorial identities on the number of points on Schubert varieties, which are of independent interest and facilitate calculations. We stick to our basic definitions of $\ell, m, \a$ and work with a fixed ordered basis $\B$ of $V_m$.
First, we need to introduce some more notations. 

\begin{definition}\label{def1}\normalfont
    Suppose $\a = (\a_1, \dots, \a_\ell) \in I(\ell, m)$ and $\a_1, \dots, \a_\ell$ are not consecutive integers. We define:
    \begin{enumerate}
        \item[(a)] $k := \max  \{j : a_{j+1} - a_{j} \ge 2\}$. Since $\a_1, \dots, \a_\ell$ are not consecutive integers, we have $1 \le k \le \ell - 1$. 
        \item[(b)] $\a' := (\a_1', \dots, \a_\ell')$ where
        $$\a_i' = 
        \begin{cases}
            \a_i \ \ \ \ \ \ \ \ \ \text{if} \ \ \ i \le k \\
            \a_i - 1 \ \ \text{if} \ \ \ i> k. 
        \end{cases}$$
        \item[(c)] $\widecheck{\a} := (\a_1, \dots, \a_{\ell -1})$.  
        \item[(d)] For every $\beta \in I(\ell, m)$ and $1 \le k \le \ell$, define $\beta^{(k)} = (\beta_1, \dots, \beta_k) \in I(k, m)$.
        \item[(e)] For $\gamma = (\gamma_1, \dots, \gamma_k) \in I(k, m)$ with $\gamma \in \nabla (\a^{(k)})$, we define
        $$C_\gamma^{(k)} (\ell, V_m, \B) :=\left\{L \in \Omb : \text{the first $k$ pivots of} \ M_{\B}(L) \ \text{occur on the columns} \ \gamma_1, \dots, \gamma_k \right\}$$
        \end{enumerate}
\end{definition}

\begin{example}\normalfont
    Suppose $\a = (3,4,6,7) \in I(4, 7)$. In this case, $k =2$, $\a' = (3,4,5,6)$ and $\widecheck{\a} = (3,4,6)$. On the other hand, if $\a = (3, 4, 6, 8)$, then we have $k = 3$, $\a' = (3, 4, 6, 7)$ and $\widecheck{\a} = (3,4,6)$.
\end{example}

\begin{definition}\label{def2}\normalfont
    Let $\a, \ell, m, \a', \widecheck{\a}$ be as in Definition \ref{def1} and as before, let $\B = \{v_1, \dots, v_m\}$ be a fixed ordered basis of $V_m$. We define 
    \begin{enumerate}
        \item[(a)] $V_i = \text{span} \{v_1, \dots, v_i\}$ for $i = 1, \dots, m$. In particular, we have the $m-1$ dimensional subspace $V_{m-1}$ of $V_m$ that is spanned by $\{v_1, \dots, v_{m-1}\}$. 
        \item[(b)] $\B'=\{v_1, \dots, v_{m-1}\}$.
    \end{enumerate}
    In particular, when $\ell \le m-1$, the Schubert subvarieties $\Ombb$ and $\Ombh$ are well-defined. 
    \begin{enumerate}
        \item[(i)] Using the definition of $\a'$, we have
    $$\Ombb := \left\{L \in G(\ell, V_m) \middle\vert  \begin{array}{l} \dim L \cap V_{\a_i} \ge i  \ \ \text{if}  \ \ 1 \le i \le k \\ \dim L \cap V_{\a_i - 1} \ge i  \ \ \text{if} \ \  k < i \le \ell \end{array}\right\}.$$
    \item[(ii)] Similarly, with the definition of $\widecheck{\a}$, we have
    $$\Ombh := \{L \in G(\ell - 1, V_{m-1}) | \dim L \cap V_{\a_i} \ge i \ \text{for all} \ 1 \le i \le \ell -1\}.$$
    \end{enumerate}
    \end{definition}

It is evident that $\Ombb \subset \Omb$. Furthermore, $\Ombb$ comes up with its own cellular decomposition. In fact, we may write 
\begin{equation}\label{a'}
    \Ombb = \bigsqcup_{\beta \in \nabla (\a')} C_\beta (\ell, V_{m - 1}, \B').
\end{equation}
Let us note an interesting coincidence below. 

\begin{lemma}\label{lemcell}
    For each $\beta \in \nabla (\a')$, we have $C_\beta (\ell, V_{m-1}, \B') = C_\beta (\ell, V_m, \B)$. 
\end{lemma}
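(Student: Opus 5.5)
Both sides are sets of subspaces described by the shape of a reduced echelon matrix, so the plan is to compare them through the correspondence \eqref{em}. First, for $\beta \in \nabla(\a')$ we have $\beta_\ell \le \a'_\ell = \a_\ell - 1 = m-1$, using $k<\ell$ from Definition \ref{def1}. Hence $C_\beta(\ell, V_{m-1}, \B')$ is defined and consists of $\ell$-dimensional subspaces of $V_{m-1}$. We regard these as subspaces of $V_m$ via the inclusion $V_{m-1}\subset V_m$, which matches $\B'$ with the first $m-1$ vectors of $\B$.

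Next we prove the inclusion $C_\beta(\ell, V_m, \B)\subseteq C_\beta(\ell,V_{m-1},\B')$. Take $L \in C_\beta(\ell, V_m, \B)$. Its matrix $M_\B(L)$ is in right-row-reduced echelon form with pivots in columns $\beta_1<\dots<\beta_\ell$. By condition (c), each row's last nonzero entry sits at its pivot, so every entry to the right of column $\beta_\ell\le m-1$ is zero; in particular column $m$ is zero. Thus every row lies in $V_{m-1}$, so $L\subseteq V_{m-1}$. Deleting the zero last column gives an $\ell\times(m-1)$ matrix with rows written in $\B'$. This matrix still satisfies conditions (a)--(e), so by uniqueness it is $M_{\B'}(L)$, with the same pivot columns $\beta_1,\dots,\beta_\ell$. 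Hence $L \in C_\beta(\ell,V_{m-1},\B')$.

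For the reverse inclusion, take $L\in C_\beta(\ell,V_{m-1},\B')$. Appending a zero $m$-th column to $M_{\B'}(L)$ gives a matrix whose rows are the same vectors of $L$, now expressed in $\B$. This matrix is still in right-row-reduced echelon form with the same pivots, so it equals $M_\B(L)$ and $L\in C_\beta(\ell,V_m,\B)$.

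There is no real obstacle. The only points needing care are confirming $\beta_\ell\le m-1$ from the definition of $\a'$ (using the standing assumption $\a_\ell=m$ from Remark \ref{reduction}(b)), and invoking uniqueness of the reduced echelon form to identify the truncated or padded matrix with $M_{\B'}(L)$ or $M_\B(L)$. Alternatively, one could argue with the flag description: $\dim L\cap V_{\beta_j}=j$ for all $j$, together with $V_{\beta_j}\subseteq V_{m-1}$, is the same condition in both ambient spaces once $L\subseteq V_\ell' $ is known, where $V_\ell'=V_{\beta_\ell}\subseteq V_{m-1}$.
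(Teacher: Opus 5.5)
Your proof is correct, but your main argument uses a different description of the cells than the paper does.

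The paper works only with the rank conditions $\dim(L\cap V_{\beta_j})=j$. Since $\beta_\ell\le m-1$, the flag $V_{\beta_1}\subset\cdots\subset V_{\beta_\ell}$ is the same whether viewed inside $V_{m-1}$ or inside $V_m$, so the defining conditions of the two cells coincide. You instead work with the right-row-reduced echelon matrices. You show that the last column of $M_{\mathcal B}(L)$ vanishes because every pivot lies in a column $\le m-1$. You then identify the truncated matrix with $M_{\mathcal B'}(L)$, and the zero-padded matrix with $M_{\mathcal B}(L)$, by uniqueness of the echelon form.

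What each approach buys:
\begin{itemize}
\item The flag argument is shorter and closer to the intrinsic definition.
\item Your argument makes explicit a point the paper's proof passes over: every $L\in C_\beta(\ell,V_m,\mathcal B)$ actually lies in $V_{m-1}$. The paper only speaks of ``a subspace $L\subset V_{m-1}$''. In the flag language this step is immediate from $\dim(L\cap V_{\beta_\ell})=\ell$, which gives $L\subseteq V_{\beta_\ell}\subseteq V_{m-1}$.
\item Your argument is exactly the zero-column identification $\mathcal M_{\alpha'}(\ell,m-1)\hookrightarrow\mathcal M_\alpha(\ell,m)$ that the paper itself uses later for the string decomposition.
\end{itemize}

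Your closing alternative is essentially the paper's proof. There the symbol $V'_\ell$ is unnecessary and clashes with the notation $V_\ell$; just write $L\subseteq V_{\beta_\ell}\subseteq V_{m-1}$.
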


\begin{proof}

Let $\beta \in \nabla(\alpha')$. By definition,
\[
C_\beta(\ell, V_{m-1}, \B')
=
\left\{
L \in G(\ell, V_{m-1}) :
\dim(L \cap V_{\beta_i}) = i \text{ for all } i=1,\dots,\ell
\right\}.
\]
Since $V_{m-1} \subset V_m$ and $\B'=\{v_1,\dots,v_{m-1}\}\subset \B=\{v_1,\dots,v_m\}$, for any  $\beta\in\nabla(\alpha')$,  the flag
\[
0 \subset V_{\beta_1} \subset \cdots \subset V_{\beta_\ell}
\]
is identical whether regarded as a flag in $V_{m-1}$ or in $V_m$. The defining rank conditions of the Schubert cell $C_\beta$ in $\Ombb$ and $\Omb$ coincide. 
Consequently, a subspace $L \subset V_{m-1}$ satisfies the defining conditions of
$C_\beta(\ell, V_{m-1}, \B')$ if and only if it satisfies those of
$C_\beta(\ell, V_m, \B)$. Hence,
\[
C_\beta(\ell, V_{m-1}, \B') = C_\beta(\ell, V_m, \B),
\]
as required.
\end{proof}

Let us now introduce some more definitions. 
\begin{definition}\label{deflift}\normalfont
Let $\a, \ell, k$ be as in Definition \ref{def1}. Define a map $\phi : \nabla (\a') \to \nabla (\a)$ given by $$(\beta_1, \dots, \beta_k, \beta_{k+1}, \dots, \beta_\ell) \mapsto (\beta_1, \dots, \beta_k, \beta_{k+1}+1, \dots, \beta_\ell+1).$$
Evidently, the map $\phi$ is well-defined. 
The map $\phi$ induces a map
$$\Phi : \{C_\beta (\ell, V_{m-1}, \B') | \beta \in \nabla (\a')\} \to \{C_\beta (\ell, V_{m}, \B) | \beta \in \nabla (\a)\}$$ given by
\begin{equation}\label{Phi}
\Phi(C_\beta (\ell, V_{m-1}, \B')) = C_{\phi(\beta)} (\ell, V_m, \B).
\end{equation}
The Schubert cells of $\Omb$ that are not images of $\Phi$ are called the \emph{missing cells}. 
\end{definition}

Indeed, the definition of missing cells is not vacuous. One can easily check that the cell corresponding to $\beta = (1, \dots, \ell)$ is a missing cell. Moreover, there are some missing cells of very high dimensions. For example, the cell corresponding to $\a'' = (\a_1, \dots, \a_k, \a_{k} + 1, \a_{k+2}, \dots, \a_{\ell})$ is a missing cell with dimension 
$\delta (\a'') = \delta (\a) - (\a_{k+1} - \a_{k} - 1)$. In fact, one could easily check that any $\gamma \in I(\ell, m)$ with $\gamma_{k+1} - \gamma_k = 1$ corresponds to a missing cell.

\begin{lemma}\label{lb}
   Let $\a, \ell, k$ be as in Definition \ref{def1}. Define $\a_0 = (1, \dots, \ell - 1, m)$. For $\beta \in \nabla (\a')$, we have
   \begin{enumerate}
       \item[(a)] $|C_{\phi(\beta)} (\ell, V_m, \B)| = q^{\ell - k}  |C_{\beta} (\ell, V_{m-1}, \B')|$.
       \item[(b)] $q^{\a_{k+1} - \a_k - 1} (|\Omb| - q^{\ell - k}|\Ombb|) \ge \begin{cases}q^{\delta (\a)} \ \ &\text{if} \ \a = \a_0 \\  q^{\delta (\a)} + q^{\delta({\a'})} & \text{if} \ \a \neq \a_0 \end{cases}$.
   \end{enumerate}
 \end{lemma}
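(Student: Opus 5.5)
Part (a) is a dimension count, and part (b) reduces to identifying a few explicit missing cells and bounding the sum over missing cells from below by their contribution.

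\emph{Part (a).} By Lemma \ref{lemcell}, $C_\beta(\ell,V_{m-1},\B')=C_\beta(\ell,V_m,\B)$, and every Schubert cell over $\Fq$ is an affine space of dimension $\delta(\cdot)$, as in \eqref{schen}. Since $\phi$ adds $1$ to each of the last $\ell-k$ coordinates, $\delta(\phi(\beta))=\delta(\beta)+(\ell-k)$. Hence $|C_{\phi(\beta)}(\ell,V_m,\B)|=q^{\ell-k}q^{\delta(\beta)}$, which is (a).

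\emph{Reduction of part (b).} The map $\phi$ is clearly injective. Summing (a) over $\beta\in\nabla(\a')$ and using the decompositions \eqref{Sch} and \eqref{a'} gives
\[
|\Omb|-q^{\ell-k}|\Ombb|=\sum_{\gamma \ \text{missing}} q^{\delta(\gamma)} .
\]
So it suffices to exhibit enough missing cells of large dimension. First I will characterise the image of $\phi$. A tuple $\gamma\in\nabla(\a)$ equals $\phi(\beta)$ for $\beta=(\gamma_1,\dots,\gamma_k,\gamma_{k+1}-1,\dots,\gamma_\ell-1)$ exactly when this $\beta$ is strictly increasing. Because $\gamma\le\a$ gives $\beta\le\a'$ automatically, this happens precisely when $\gamma_{k+1}\ge\gamma_k+2$. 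Therefore the missing cells are exactly the $\gamma\in\nabla(\a)$ with $\gamma_{k+1}=\gamma_k+1$.

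Set $d=\a_{k+1}-\a_k-1\ge1$. Recall that $\a_{k+1},\dots,\a_\ell$ are consecutive, by the maximality of $k$.

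\emph{The first missing cell.} Take $\a''=(\a_1,\dots,\a_k,\a_k+1,\a_{k+2},\dots,\a_\ell)$. It satisfies $\delta(\a'')=\delta(\a)-d$, and after multiplying by $q^{d}$ it contributes $q^{\delta(\a)}$. This already settles the case $\a=\a_0$.

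\emph{The second missing cell, when $\a\neq\a_0$.} I need a second missing cell $\gamma\neq\a''$ with $\delta(\gamma)\ge \delta(\a'')-(\ell-k)=\delta(\a')-d$. After multiplying by $q^d$, it then contributes at least $q^{\delta(\a')}$. I plan to split into two cases.

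\emph{Case $\ell-k\ge2$.} Take $\gamma=(\a_1,\dots,\a_k,\a_k+1,\a_{k+2}-1,\dots,\a_\ell-1)$. It is strictly increasing because $\a_{k+2}-1=\a_k+d+1>\a_k+1$. It satisfies $\gamma\le\a$ and is missing, and $\delta(\gamma)=\delta(\a'')-(\ell-k-1)$, which is larger than required.

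\emph{Case $\ell-k=1$.} Here I lower the block of $\a$ ending at position $k$ by one. That is, I replace $\a_{j},\dots,\a_k$, the last run of consecutive entries, by $\a_j-1,\dots,\a_k-1$, keep $\gamma_{k+1}=\gamma_k+1=\a_k$, and do the same for the earlier blocks if necessary. The simplest version subtracts $1$ only from the largest index $i\le k$ with $\a_i-1>\a_{i-1}$ and shifts only that run, which keeps the total decrease small.

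This is the step I expect to be the main obstacle. One must check that such a downward shift, of total size at most $\ell-k=1$, is possible exactly when $(\a_1,\dots,\a_k)\neq(1,\dots,k)$, i.e.\ when $\a\ne\a_0$. One must also verify that the resulting dimension loss is at most $1$. If a single-entry decrease is not available because the block is tight, I would instead decrease the smallest entry $\a_i$ with $\a_i>i$. This shifts $\gamma_{k+1}$ as well, so I would recheck the missing-cell condition directly.

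Adding the two contributions, all other terms being nonnegative, yields the bound claimed in (b).
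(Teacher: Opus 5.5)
Your part (a), the identity $|\Omb|-q^{\ell-k}|\Ombb|=\sum_{\gamma\ \text{missing}}q^{\delta(\gamma)}$, the characterisation of missing cells by $\gamma_{k+1}=\gamma_k+1$, the cell $\a''$, and your case $\ell-k\ge 2$ are all correct. The genuine gap is the case $\ell-k=1$. It cannot be closed, because (b) is false there.

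In that case a missing cell $\gamma\le\a$ has $\gamma_{k+1}=\gamma_k+1$. So lowering $\gamma_k$ below $\a_k$ drags $\gamma_{k+1}$ down with it, and $\delta$ drops by at least $2$ relative to $\delta(\a'')$. Hence a missing cell $\gamma\ne\a''$ with $\delta(\gamma)\ge\delta(\a'')-1$ exists only if a single entry $\a_i$ with $i\le k-1$ can be lowered by one. That happens only if $(\a_1,\dots,\a_{k-1})\ne(1,\dots,k-1)$, which is strictly stronger than $\a\ne\a_0$. For the same reason your rule ``largest admissible $i\le k$'' is wrong: if that index is $k$, it costs $2$ even when a cheaper $i<k$ exists.

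Now suppose $\ell=k+1$, $(\a_1,\dots,\a_{k-1})=(1,\dots,k-1)$ and $\a_k>k$. Then the missing cells are exactly $(1,\dots,k-1,j,j+1)$ for $k\le j\le\a_k$, with $\delta=2(j-k)$. After multiplying by $q^{\a_{k+1}-\a_k-1}$, those other than $\a''$ contribute less than $q^{\delta(\a)}/(q^2-1)<q^{\delta(\a)-1}=q^{\delta(\a')}$. The smallest instance is $\a=(2,4)$, $\ell=2$, $m=4$. Here $k=1$, $\a'=(2,3)$, $|\Omb|=1+q+2q^2+q^3$ and $|\Ombb|=1+q+q^2$, so
\[
q^{\a_2-\a_1-1}\bigl(|\Omb|-q\,|\Ombb|\bigr)=q(1+q^2)=q^3+q<q^3+q^2=q^{\delta(\a)}+q^{\delta(\a')},
\]
although $\a\neq\a_0=(1,4)$.

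The paper's own proof has the same hole. Its second cell $\bar\a$ lowers $\a_{k'}$ at the first gap $k'$.
\begin{itemize}
\item When $k'<k$, this is a genuine missing cell of the right dimension; it is exactly the cost-one shift above.
\item When $k'=k$, one gets $\bar\a_{k+1}-\bar\a_k=2$, so $\bar\a$ lies in the image of $\phi$. For $\a=(2,4)$, $\bar\a=(1,3)=\phi(1,2)$.
\item When $k'=k+1$, the two defining clauses of $\bar\a$ collide.
\end{itemize}
Your construction for $\ell-k\ge2$ correctly covers the subcases $k'\in\{k,k+1\}$ with $\ell-k\ge2$, which the paper's $\bar\a$ mishandles.

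What is actually true is (b) with the exceptional case $\a=\a_0$ enlarged to all $\a=(1,\dots,\ell-2,\a_{\ell-1},m)$ with $\a_{\ell-1}\le m-2$. For $\ell=2$ this is every admissible $\a$. For those $\a$ only the first alternative $q^{\delta(\a)}$ survives. Case 2 of Theorem \ref{ineq:prel} invokes the second alternative for every $\a\ne\a_0$, so it needs a separate argument there.
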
   

\begin{proof}
    \
    \begin{enumerate}
        \item[(a)]  The first assertion follows since the left hand side equals $q^{\delta(\phi(\beta))}$, while the right hand side equals $q^{\delta (\beta) + \ell - k}$ and by definition $\delta (\phi(\beta)) = \delta(\beta) + \ell -k$. 
        \item[(b)] Let us denote by $I_1 = \{\gamma  \in \nabla (\a) : \gamma = \phi (\beta) \ \text{for some} \ \beta \in \nabla (\a')\}$ and $I_2 = \nabla (\a) \setminus I_1$. Note that for every $\gamma \in I_2$, the cell $C_\gamma (\ell, V_m, \B)$ is a missing cell, as defined above. We have
        \begin{align*}
            & \ \ |\Omb| - q^{\ell - k} |\Ombb| \\
            &= \sum_{\gamma \in \nabla (\a)} |C_{\gamma} (\ell, V_m, \B)| - q^{\ell - k} \sum_{\beta \in \nabla (\a')} |C_\beta(\ell, V_{m-1}, \B')| \\
            &=\sum_{\beta \in \nabla (\a')} (|C_{\phi (\beta)} (\ell, V_m, \B))| - q^{\ell - k} |C_\beta(\ell, V_{m-1}, \B')|)  + \sum_{\gamma \in I_2} |C_{\gamma} (\ell, V_m, \B)| \\
            &= \sum_{\gamma \in I_2} |C_{\gamma} (\ell, V_m, \B)|.
        \end{align*}
        It is easy to see that for any $\a = (\a_1, \dots, \a_\ell)$, with $k \le \ell -1$, the cell corresponding to the tuple $\tilde\a = (\a_1, \dots, \a_k, \a_k + 1, \a_{k+2}, \dots, \a_\ell)$ is a missing cell. Moreover, 
        $$q^{\a_{k+1} - \a_k - 1} q^{\delta (\tilde\a)} = q^{\delta (\a)}.$$
        In particular, if $k = \ell - 1$, then  $\tilde\a = (\a_1, \dots, \a_{\ell - 1}, \a_{\ell - 1} + 1)$ and the same equality as above holds. This proves that 
        $$q^{\a_{k+1} - \a_k - 1} (|\Omb| - q^{\ell - k}|\Ombb|) \ge q^{\delta (\a)}$$
        for every $\a \in I(\ell, m)$. 
        Moreover, if $\a \neq \a_0$, then we can produce another missing cell $\bar{\a}$ such that $\a_{k+1} - \a_k - 1 + \delta (\bar\a) \ge \delta (\a')$ as given below. Let $$k' = \begin{cases} 1 \ &\text{if} \ \a_1 > 1 \\ \min\{j : \a_j - \a_{j-1} > 1 \} \ &\text{if} \ \a_1 = 1 \end{cases}.$$ Clearly, $k'$ is well-defined, thanks to the condition that $\a \neq \a_0$. 
        Define $\bar\a = (\bar\a_1, \dots, \bar\a_\ell)$ as follows: 
        $$ \bar\a_i  = \begin{cases} 
        \a_i - 1 &\text{if}  \ i = k' \\
        \a_{k} + 1 \ &\text{if} \ i = k + 1 \\
        \a_i \ &\text{otherwise} 
        \end{cases}$$
        One checks that $\a_{k+1} - \a_k - 1 + \delta(\bar\a) = \delta (\a) - 1 \ge \delta (\a')$.
    \end{enumerate}
    This completes the proof. 
\end{proof}

We state and prove the following bound. Undoubtedly, the bound looks unmotivated at this stage, but it will be central in the proof of our main theorem. 

\begin{theorem}\label{ineq:prel}
     Let $\a, \ell, k$ be as in Definition \ref{def1}. For $q > q_0 (\ell) : = \frac{2^{\frac{1}{\ell - 1}}}{2^{\frac{1}{\ell - 1}} - 1}$ 
     $$q^{\a_{k+1} -\a_k - 1} (|\Omb| - q^{\ell - k}|\Ombb|) > q^{\delta (\a')} + q^{m - \ell} |\Omega_{\widecheck{\a}} (\ell - 1, V_{m-1},\B')| - q^{\delta (\a)}.$$
\end{theorem}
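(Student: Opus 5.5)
The plan is to combine the lower bound of Lemma \ref{lb}(b) with the crude upper bound of Proposition \ref{upp}, applied to the smaller Schubert variety $\Omega_{\widecheck{\a}}(\ell-1, V_{m-1}, \B')$. Throughout we use the convention of Remark \ref{reduction}(b) that $\a_\ell = m$. The case $\a = \a_0$ is handled separately by a direct computation.

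\emph{Generic case $\a \neq \a_0$.} By Lemma \ref{lb}(b) the left-hand side is at least $q^{\delta(\a)} + q^{\delta(\a')}$. It therefore suffices to prove
$$q^{m-\ell}\,|\Omega_{\widecheck{\a}}(\ell-1, V_{m-1}, \B')| < 2q^{\delta(\a)}.$$
The variety $\Omega_{\widecheck{\a}}(\ell-1,V_{m-1},\B')$ is the union of the cells indexed by $\nabla(\widecheck{\a})$, so its cardinality is given by \eqref{schen} exactly as before. Proposition \ref{upp} then gives
$$|\Omega_{\widecheck{\a}}(\ell-1, V_{m-1}, \B')| < \left(\tfrac{q}{q-1}\right)^{\ell-1} q^{\delta(\widecheck{\a})}.$$
The exponents match: since $\a_\ell = m$,
$$m-\ell+\delta(\widecheck{\a}) = \a_\ell - \ell + \sum_{i<\ell}\a_i - \tfrac{(\ell-1)\ell}{2} = \delta(\a).$$
So we only need $\left(\frac{q}{q-1}\right)^{\ell-1} \le 2$. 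This is equivalent to $q \ge 2^{1/(\ell-1)}/(2^{1/(\ell-1)}-1) = q_0(\ell)$, which holds since $q > q_0(\ell)$. Strictness of the final inequality comes from Proposition \ref{upp}.

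One technicality is that Lemma \ref{lem:ineq}, and hence Proposition \ref{upp}, is stated only for $\ell \ge 2$. When $\ell = 2$ the variety $\Omega_{\widecheck{\a}}$ lives in $G(1, V_{m-1})$, and I will check it by hand:
$$|\Omega_{(\a_1)}| = \frac{q^{\a_1}-1}{q-1} < \frac{q}{q-1}\,q^{\a_1-1}.$$
This gives the same bound, with $q_0(2) = 2$.

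\emph{Exceptional case $\a = \a_0 = (1,\dots,\ell-1,m)$.} Lemma \ref{lb}(b) yields only $q^{\delta(\a)}$ here, so I compute both sides directly.
\begin{itemize}
\item[] The data are $k = \ell-1$, $\a' = (1,\dots,\ell-1,m-1)$, $\delta(\a') = m-\ell-1$ and $\delta(\a) = m-\ell$.
\item[] $\widecheck{\a} = (1,\dots,\ell-1)$, so $\Omega_{\widecheck{\a}}$ is a single point.
\item[] Hence the right-hand side equals $q^{m-\ell-1} + q^{m-\ell} - q^{m-\ell} = q^{m-\ell-1}$.
\end{itemize}
On the left, the proof of Lemma \ref{lb}(b) shows that the bracket equals the sum of the sizes of the missing cells. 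The only missing cell is $(1,\dots,\ell)$, since $\gamma_\ell = \gamma_{\ell-1}+1$ forces this. So the left-hand side is $q^{m-\ell}\cdot 1 = q^{m-\ell}$, which exceeds $q^{m-\ell-1}$.

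The main point needing care is the exponent identity $m-\ell+\delta(\widecheck{\a}) = \delta(\a)$. It is what makes the constant $2$, and hence the threshold $q_0(\ell)$, come out exactly.
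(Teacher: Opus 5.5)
Your argument follows the paper's proof almost line for line: the lower bound from Lemma \ref{lb}(b), Proposition \ref{upp} applied to $\Omega_{\widecheck{\a}}(\ell-1,V_{m-1},\B')$ with $m-\ell+\delta(\widecheck{\a})=\delta(\a)$, and a direct check at $\a_0$. Your extra care at $\ell=2$ is welcome. However, the step ``by Lemma \ref{lb}(b) the left-hand side is at least $q^{\delta(\a)}+q^{\delta(\a')}$'' fails, because the second case of Lemma \ref{lb}(b) is false; the paper's proof has exactly the same gap. Take $\ell=2$, $m=4$, $\a=(2,4)\neq\a_0=(1,4)$. Then $k=1$, $\a'=(2,3)$, $|\Omb|=1+q+2q^2+q^3$ and $|\Ombb|=|G(2,V_3)|=1+q+q^2$, so
\[
q^{\a_2-\a_1-1}\bigl(|\Omb|-q|\Ombb|\bigr)=q^3+q<q^3+q^2=q^{\delta(\a)}+q^{\delta(\a')}.
\]
The cell $\bar\a=(1,3)$ built in the proof of Lemma \ref{lb} is $\phi(1,2)$, so it is not missing. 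More generally, the bound fails exactly for $\a=(1,\dots,\ell-2,a,m)$ with $\ell\le a\le m-2$. There $k=\ell-1$ and the missing cells are $(1,\dots,\ell-2,j,j+1)$ for $\ell-1\le j\le a$. After multiplying by $q^{m-a-1}$ they give $q^{\delta(\a)}+q^{\delta(\a)-2}+q^{\delta(\a)-4}+\cdots$, which is less than $q^{\delta(\a)}+q^{\delta(\a)-1}=q^{\delta(\a)}+q^{\delta(\a')}$. For every other non-consecutive $\a\neq\a_0$ the bound is true, so your generic argument is valid there:
\begin{itemize}
\item if $k\le\ell-2$, lower the $(k+2)$-nd entry of the missing cell $\tilde\a$ from the proof of Lemma \ref{lb};
\item if $k=\ell-1$, lower the entry of $\tilde\a$ at the smallest $i\le\ell-2$ with $\a_i>i$.
\end{itemize}

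The statement itself survives, but you need to enlarge your exceptional case to this whole family; your $\a_0$ is its member $a=\ell-1$. In this family, use only the correct bound ``left-hand side $\ge q^{\delta(\a)}$'' coming from $\tilde\a$. Combine it with the exact count $\Omega_{\widecheck{\a}}(\ell-1,V_{m-1},\B')=\{L: V_{\ell-2}\subset L\subset V_a\}\cong\PP^{a-\ell+1}$ (with $V_0=0$). Since $\delta(\a)=(a-\ell+1)+(m-\ell)$, this gives $q^{m-\ell}|\Omega_{\widecheck{\a}}(\ell-1,V_{m-1},\B')|<\frac{q}{q-1}q^{\delta(\a)}$. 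Hence the right-hand side is less than
\[
q^{\delta(\a)-1}+\tfrac{q}{q-1}q^{\delta(\a)}-q^{\delta(\a)}=\Bigl(\tfrac1q+\tfrac1{q-1}\Bigr)q^{\delta(\a)}\le q^{\delta(\a)}
\]
for $q\ge3$, which $q>q_0(\ell)\ge2$ guarantees. The crude bound of Proposition \ref{upp} would not suffice in this family. With only $q^{\delta(\a)}$ on the left, it would require $2-\bigl(\frac{q}{q-1}\bigr)^{\ell-1}\ge\frac1q$, which fails for instance at $\ell=3$, $q=4>q_0(3)$.
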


\begin{proof}
    We distinguish the proof into two cases. 

\begin{enumerate}
    \item[\bf{Case 1:}] \emph{Suppose that $\a = (1, \dots, \ell - 1, m)$.} From Lemma \ref{lb}, we have 
    $$q^{\a_{k+1} -\a_k - 1} (|\Omb| - q^{\ell - k}|\Ombb|) \ge q^{\delta (\a)} = q^{m - \ell}.$$
    On the other hand, we have $|\Ombh| = 1.$
    Combining all these, we see that the right-hand side of the desired inequality equals $q^{\delta (\a')}$. The assertion follows since $q^{\delta (\a)} > q^{\delta (\a')}$. 

    \item[\bf{Case 2:}] \emph{Suppose that $\a$ is not of the form $(1, \dots, \ell - 1, m)$}. Again Lemma \ref{lb} shows that the left-hand side of the desired inequality is bigger than or equal to $q^{\delta(\a)} + q^{\delta (\a')}$. On the other hand Proposition \ref{upp} shows that, 
    \begin{align*}
    q^{m - \ell} |\Ombh|  + q^{\delta (\a')} - q^{\delta (\a)}  &< \left(\frac{q}{q-1}\right)^{\ell - 1} q^{\delta (\widecheck{\a})}q^{m-\ell}  + q^{\delta (\a')} - q^{\delta (\a)}. \\
    &=\left(\frac{q}{q-1}\right)^{\ell - 1}q^{\delta (\a)}  + q^{\delta (\a')} - q^{\delta (\a)}.\end{align*}
    In order to prove the assertion, it is enough to show that $$q^{\delta (\a)} + q^{\delta (\a')} > \left(\frac{q}{q-1}\right)^{\ell - 1} q^{\delta (\a)}  + q^{\delta (\a')} - q^{\delta (\a)},$$ which is equivalent to the inequality 
    $$\left(\frac{q}{q-1}\right)^{\ell - 1} < 2. $$
    The above inequality holds if and only if 
    $$\left(\frac{q}{q-1}\right) < 2^{\frac{1}{\ell - 1}} \iff \frac{1}{q-1} <  2^{\frac{1}{\ell - 1}} - 1\iff q > 1 + \frac{1}{2^{\frac{1}{\ell - 1}} - 1}  = \frac{2^{\frac{1}{\ell - 1}}}{2^{\frac{1}{\ell - 1}} - 1} = q_0 (\ell).$$
\end{enumerate}
This completes the proof. 
\end{proof}
Before concluding the subsection, we remark that the function $q_0 (\ell)$ is increasing in $\ell$.  
\subsection{A decomposition of Schubert subvarieties and hyperplane sections} This subsection is motivated by \cite[Section 3]{DD}. We will continue with the notations introduced in the previous sections and subsections. We note that the Schubert subvariety $\Ombb$ is naturally contained in the Schubert subvariety $\Omb$. One can prove it directly using the definitions. On the other hand, this is a direct consequence of Lemma \ref{lemcell}. Independently, one can also see that $\Ombb = \Omb \cap G(\ell, V_{m-1}) \subseteq \Omb$. Lemma \ref{lemcell} implies that 
\begin{equation}\label{st1}
\Omb = \Ombb \bigsqcup \left(\bigsqcup_{\beta \in \Delta (\a') \cap \nabla(\a)} C_\beta (\ell, V_m, \B) \right).
\end{equation}
As in \cite[Section 3]{DD}, we decompose $\displaystyle\left(\bigsqcup_{\beta \in \Delta (\a') \cap \nabla(\a)} C_\beta (\ell, V_m, \B) \right)$ into \emph{strings} to facilitate computations. First, we note that
\begin{equation}\label{pivot}
    \Delta (\a') \cap \nabla(\a) = \{\beta \in \nabla (\a) : \beta_\ell = m\}.
\end{equation}
We recall that the condition $\beta_\ell = m$ mentioned above is a consequence of our running assumption that in our Schubert variety $\Omb$, the index $\a = (\a_1, \dots, \a_\ell)$ satisfies $\a_\ell = m$. 
Before delving into the technical details, for the sake of clarity, let us revisit our notations related to matrix representations from \eqref{em}. Recall that the bijection in \eqref{em} associates every element of $G(\ell, V_m)$ to a matrix in $\M(\ell, m)$ with respect to the fixed basis $\B = \{v_1, \dots, v_m\}$. Let us denote by $\M_{\a} (\ell, m)$ the image of this map restricted to $\Omb$ and $\M_{\a'} (\ell, m-1)$ that of $\Ombb$. In particular, we have the following description of $\M_\a (\ell, m)$\:
$$\M_\a (\ell, m) = \{M \in \M (\ell, m) | \ \text{the pivot in the $i$-th row occur on $\beta_i$th column with} \ \beta_i \le \a_i, \ \forall i\}.$$
The description of $\M_{\a'} (\ell, m-1)$ can be given similarly. An important remark on $\M_{\a'} (\ell, m-1)$ is in order. As per our continued assumptions, we are treating the elements of $\M_{\a'} (\ell, m-1)$ as $\ell \times (m-1)$ matrices. However, when we say that we view $\M_{\a'} (\ell, m-1)$ as a subset of $\M_\a (\ell, m)$, we use the natural inclusion $\M_{\a'} (\ell, m-1)$ into $\M_\a (\ell, m)$ by juxtaposing a zero column to the right of each element of $\M_{\a'} (\ell, m-1)$. Thus, we may regard the elements of $\M_{\a'}(\ell, m-1)$ as $\ell \times m$ matrices with entries in $\FF$ so that they can safely be regarded as elements of $\M_\a (\ell, m)$.  
Inspired from \cite[Section 3]{DD}, we will provide a decomposition of $\M_\a (\ell, m)$ into $\M_{\a'} (\ell, m-1)$ and  union of \emph{strings}. This decomposition is essentially obtained through \eqref{st1} above. 
Let us write 
$$T_\a (\ell, m) := \{M \in \M_\a(\ell, m) | \text{the  pivot of the last row of $\M$ occurs on $m$-th column}\}.$$
One can see that, under the correspondence in \eqref{em}, the set $T_\a (\ell, m)$ corresponds to 
$$\F_\a (\ell, V_m, \B) := \Omb \setminus \Ombb = \bigsqcup_{\beta \in \Delta (\a') \cap \nabla (\a)} C_\beta (\ell, V_m, \B).$$
Indeed, the second equality is a consequence of \eqref{st1}. It is understood that a stratification of $T_\a(\ell, m)$ into strings gives rise to a decomposition of $\F_\a (\ell, V_m, \B)$ into smaller components.

Let $M \in T_{\a}(\ell, m)$.  We may write
\begin{equation}\label{cm}
    M = \begin{pmatrix}
        \widecheck{M} &&  {\bf 0} \\ 
          c_M && 1
    \end{pmatrix},
\end{equation}
where $\widecheck{M} \in \M_{\widecheck{\a}}(\ell - 1, m-1)$, the symbol ${\bf 0}$ denotes a column matrix of length $(\ell - 1)$ with all entries equal to ${0}$, whereas $c_M$ is a row vector of length $m-1$, and  $1$ is the multiplicative identity of $\FF$. Since $M$ is in right-row-reduced echelon form, so is $\widecheck{M}$, which justifies the inclusion $\widecheck{M} \in \M_{\widecheck{\a}}(\ell - 1, m-1)$. The pivot columns of $\widecheck{M}$ are determined by those of $M$, and the entries in $c_M$ on the pivot columns of $\widecheck{M}$ are zero. 
Let us denote $(M_{n_1}, \dots, M_{n_{m-\ell}})$ the entries in the non-pivotal columns of $c_M$, where $n_1 < \cdots < n_{m-\ell}$. This defines a map 
$$s : T_\a (\ell, m) \to \FF^{m-\ell}.$$
The map $s$ is clearly surjective. We claim that $s^{-1} (\nu)$ can be identified with $\M_{\widecheck{\a}} (\ell- 1, m-1)$ for any $\nu = (\nu_1, \dots, \nu_{m-\ell})  \in \FF^{m-\ell}$. 
Define 
\begin{equation}\label{bij2}
\phi_\nu : \M_{\widecheck{\a}} (\ell-1, m-1) \to s^{-1} (\nu) \ \ \ \text{by} \ \ \ M' \mapsto \begin{pmatrix}
        {M'} &&  0 \\ 
          c_{M', \nu} && 1 \end{pmatrix},\end{equation}
where $c_{M', \nu}$ is uniquely determined by the non-pivot positions of $M'$ and $\nu$. Clearly, the map $\phi_\nu$ is a bijection. Using the bijection $\M_{\widecheck{\a}} (\ell - 1, m-1) \longleftrightarrow \Ombh$, we note that $|s^{-1} (\nu)| = |\Ombh|$ for all $\nu \in \Fq^{m-\ell}$. Moreover, it follows trivially that $s^{-1} (\nu) \cap s^{-1} (\nu') = \emptyset$ whenever $\nu \neq \nu'$. Thus,   
\begin{equation}\label{t}
  \M_\a(\ell, m) = \M_{\a'} (\ell, m-1) \bigsqcup T_\a(\ell, m)  \ \ \ \text{where} \ \ \         T_\a(\ell, m)  = \bigsqcup_{\nu \in \FF^{m-\ell}} s^{-1} (\nu).  
\end{equation}
\noindent We call $s^{-1} (\nu)$, \emph{the $\nu$-th string of $\Omb$ with respect to $\Ombb$}.  
We look at a quick enumerative application of the above decomposition. 

\begin{proposition}\label{ineq:dec}
    Let $\FF = \Fq$. Then 
    \begin{enumerate}
        \item[(a)] $|\Omb| = |\Ombb| + q^{m-\ell} |\Ombh|$.
        \item[(b)] with $k$ as defined above and $q > q_0 (\ell)$, we have
        \begin{align*}
        q^{\a_{k+1} -\a_k - 1} (|\Omb| - & q^{\ell - k}|\Ombb|) \\ 
        &> q^{\delta (\a')} + (|\Omb| - |\Ombb|) - q^{\delta (\a)}.
        \end{align*}
    \end{enumerate}
\end{proposition}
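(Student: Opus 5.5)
Part (a) follows from the string decomposition \eqref{t}, and part (b) is Theorem \ref{ineq:prel} rewritten by means of part (a).

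For part (a), take $\FF = \Fq$. Then \eqref{t} gives
$$\M_\a(\ell, m) = \M_{\a'}(\ell, m-1) \sqcup \bigsqcup_{\nu \in \Fq^{m-\ell}} s^{-1}(\nu).$$
Under the bijection \eqref{em}, $\M_\a(\ell,m)$ corresponds to $\Omb$. Similarly $\M_{\a'}(\ell,m-1)$ corresponds to $\Ombb$; this uses Lemma \ref{lemcell} to view its cells as cells of $\Omb$. So $|\Omb| = |\Ombb| + |T_\a(\ell,m)|$.

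The strings are pairwise disjoint, and each $s^{-1}(\nu)$ is in bijection with $\M_{\widecheck{\a}}(\ell-1,m-1)$ via $\phi_\nu$ in \eqref{bij2}. The latter is in bijection with $\Ombh$, so $|s^{-1}(\nu)| = |\Ombh|$. Summing over the $q^{m-\ell}$ choices of $\nu$ gives $|T_\a(\ell,m)| = q^{m-\ell}|\Ombh|$, which proves (a).

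As a consistency check, the cell sum \eqref{schen} can be split according to \eqref{st1} and \eqref{pivot}. The cells with $\beta_\ell = m$ are indexed by $\widecheck\beta = (\beta_1,\dots,\beta_{\ell-1}) \in \nabla(\widecheck{\a})$, and for these $\delta(\beta) = \delta(\widecheck\beta) + (m-\ell)$. This reproduces the factor $q^{m-\ell}$.

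For part (b), the right-hand side of Theorem \ref{ineq:prel} contains $q^{m-\ell}|\Omega_{\widecheck{\a}}(\ell-1,V_{m-1},\B')|$. By part (a) this equals $|\Omb| - |\Ombb|$. Substituting it into the inequality of Theorem \ref{ineq:prel}, which holds for $q > q_0(\ell)$, gives (b) exactly.

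The only point needing care is the hypothesis $\ell \le m-1$ in Definition \ref{def2}, which makes $\Ombh$ well defined. It holds because $\a$ is non-consecutive with $\a_\ell = m$ (Remark \ref{reduction}(b)), so $m > \ell$. With that noted, the remaining work is bookkeeping on the bijections already established.
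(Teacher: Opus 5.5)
Your proposal is correct and is the paper's argument: (a) comes from the decomposition \eqref{t} of $\M_\a(\ell,m)$ into $\M_{\a'}(\ell,m-1)$ and the $q^{m-\ell}$ strings $s^{-1}(\nu)$, each in bijection with $\Ombh$. Part (b) is Theorem~\ref{ineq:prel} with $q^{m-\ell}|\Ombh|$ replaced by $|\Omb|-|\Ombb|$ via (a).

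A caution that concerns the paper rather than your argument: the paper's proof of Theorem~\ref{ineq:prel} relies on Lemma~\ref{lb}(b), which fails for $\a=(2,4)$, where its left side is $q^3+q<q^3+q^2$. Theorem~\ref{ineq:prel} itself still holds for $\a=(2,4)$, since there it reads $q^3+q>2q^2$.
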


\begin{proof}
    Part (a) follows at once from \eqref{t} and the discussion above, while part (b) is a consequence of part (a) and Theorem \ref{ineq:prel}.
\end{proof}

We would like to understand how certain hyperplanes behave with the above decomposition of $\Omb$. Recall the following inclusion of $\Omb \ \text{and} \  \Ombb$ in $G(\ell, V_m)$. 

\[
\begin{tikzcd}[column sep=large, row sep=large]
\Omega_{\alpha'}(\ell, V_{m-1}, \mathcal{B'})
  \arrow[r, hook]
  \arrow[d, hook]
&
\mathbb{P}^{k_{\alpha'}-1}
  \arrow[d, hook]
\\
\Omega_{\alpha}(\ell, V_m, \mathcal{B})
  \arrow[r, hook]
  \arrow[d, hook]
&
\mathbb{P}^{k_{\alpha}-1}
  \arrow[d, hook]
\\
G(\ell, V_m)
  \arrow[r, hook]
&
\mathbb{P}^{k-1}
\end{tikzcd}
\]

Let  $\Pi_\a$ be a hyperplane of $\PP^{k_\a - 1}$ given by a homogeneous polynomial $F_\a (X_\beta | \beta \in \nabla (\a))$. 
\begin{enumerate}
    \item[(I)] \emph{Note that $\Pi_\a$ does not contain $\Omb$}.
This follows from the non-degeneracy of $\Omb$ in $\PP^{k_\a - 1}$. Thus $F_\a \neq 0$. We write 
$$F_\a = F_{\a'} (X_\beta| \beta \in \nabla (\a') )+ F' (X_\beta | \beta \in \nabla (\a) \cap \Delta (\a')).$$
Then $\Pi_\a$ restricts to a hyperplane $\Pi_{\a'} = \Pi_\a \cap \PP^{k_{\a'} - 1}$ of $\PP^{k_{\a'} - 1}$ given by the vanishing of  $F_{\a'}$ in $\PP^{k_{\a'} - 1}$, provided $F_{\a'} \neq 0$. 
\item[(II)] \emph{Suppose that $\Pi_\a$ contains $\Ombb$}. The non-degeneracy of $\Ombb$ in $\PP^{k_{\a'} - 1}$ implies that the assumption is equivalent to the statement that $F_{\a'} = 0$. Consequently, the polynomial $F_\a$ is a linear combination of $X_\beta$ as $\beta$ varies in the set $\nabla (\a) \cap \Delta(\a')$. We note that for each such $\beta \in \nabla (\a) \cap \Delta(\a')$, we have $\beta_\ell = \a_\ell = m$.  
It also follows from \eqref{t} that 
\begin{equation}\label{t1}
    |\Pi_\a \cap \Omb| = |\Pi_\a \cap \Ombb| + \sum_{\nu \in \Fq^{m - \ell}} |\Pi_\a \cap s^{-1} (\nu)|.
\end{equation}
\noindent    Since $\Ombb \subseteq \Pi_\a$, we see that $|\Pi_\a \cap \Ombb| = |\Ombb|$. Consequently, the equality above translates to 
\begin{equation}\label{t1.5}
    |\Pi_\a \cap \Omb| = |\Ombb| + \sum_{\nu \in \Fq^{m - \ell}} |\Pi_\a \cap s^{-1} (\nu)|.
\end{equation}
    \end{enumerate}
We will, until the end of this section, assume that $\Pi_\a$ is a hyperplane of $\PP^{k_\a - 1}$ satisfying (II) above.     
 In order to compute $ |\Pi_\a \cap \Omb|$, it is now imperative to better understand the set $\Pi_\a \cap s^{-1} (\nu)$ for each $\nu \in \Fq^{m-\ell}$. For a fixed $\nu \in \Fq^{m - \ell}$, recall that set  $s^{-1} (\nu)$ is given by
$$\left \{\begin{pmatrix} \widecheck{M} && \bf{0} \\ c  && 1\end{pmatrix} : \widecheck{M} \in \M_{\widecheck{\a}} (\ell - 1, m-1) \right\},$$
which as per the above discussion is in one-one correspondence with $M_{\widecheck{\a}} (\ell - 1, m-1)$.
We thus obtain the following bijections given by natural identifications:
\begin{equation}\label{bijn}
    s^{-1} (\nu)  \longleftrightarrow \M_{\widecheck{\a}} (\ell - 1, m-1) \longleftrightarrow \Ombh.
\end{equation}

Take $M \in s^{-1} (\nu)$. For $\beta \in \Delta (\a') \cap \nabla (\a)$, we see that $\beta_\ell = m$ and consequently, $X_{\beta} (M) = X_{\widecheck{\beta}, \nu} (\widecheck{M})$, where the latter is determined by taking the $(\ell - 1) \times (\ell - 1)$ minor of $M$ given by the columns numbered $\beta_1, \dots, \beta_{\ell - 1}$  and the first $\ell - 1$ rows. 
For $$F_\a = \displaystyle\sum_{\beta \in \Delta (\a') \cap \nabla (\a)} c_\beta X_\beta, \ \ \text{we write} \ \ F_{\widecheck{\a}, \nu} = \displaystyle\sum_{\beta \in \Delta (\a') \cap \nabla (\a)}c_\beta X_{\widecheck{\beta}, \nu}.$$ For each $M \in \Omb \setminus \Ombb$, we have 
$F_\a (M) = F_{\widecheck{\a}, \nu} (M)$. Before proceeding further, we remark that the evaluation is \emph{independent} of $\nu$. Namely, if $M, M' \in \Omb \setminus \Ombb$, with $M \in s^{-1} (\nu)$ and $M' \in s^{-1} (\mu)$ for some distinct $\nu, \mu \in \Fq^{m-\ell}$, but $\widecheck{M} = \widecheck{M'}$, then $F_{\a} (M) = F_{\a} (M')$. In particular, we have
$$|\{M \in s^{-1} (\nu) : F_{\a} (M) = 0\}| = |\{M \in s^{-1} (\mu) : F_{\a} (M) = 0\}| \ \ \text{for} \ \ \nu, \mu \in \Fq^{m-\ell}.$$
Combining this with \eqref{t1.5}, we obtain
\begin{equation}\label{t2}
    |\Pi_\a \cap \Omb| = |\Ombb| + q^{m-\ell} |\Pi_\a \cap s^{-1} (\nu)| 
\end{equation}

Thus we may fix $\nu \in \Fq^{m-\ell}$ and focus on computing $|\Pi_\a \cap s^{-1} (\nu)|$. 
We have already identified $s^{-1} (\nu)$ with the Schubert subvariety $\Ombh$. Let us ponder on the Schubert subvariety $\Ombh$ a little more before proceeding further. Indeed, $\Ombh \subset G(\ell - 1, V_{m-1})$. Now $G(\ell - 1, V_{m-1})$ is embedded inside the projective space 
$\PP \left( \displaystyle{\bigwedge^{\ell-1}} V_{m-1}\right)$ which is a projective space of dimension $\widecheck{k} - 1$, where $\widecheck{k} = {m - 1 \choose \ell - 1}$. Moreover, as usual, the Schubert subvariety $\Ombh$ is obtained by taking an appropriate linear section of $G(\ell - 1, V_{m-1})$ given by the polynomials $X_{\widecheck{\beta}}$ where $\widecheck{\beta} \in I(\ell - 1, m-1)$ with $\widecheck{\beta} \not\leq \widecheck{\a}$. 
We see that these $\widecheck{\beta} = (\widecheck{\beta}_1, \dots, \widecheck{\beta}_{\ell - 1})$-s get naturally identified with $\beta \in \Delta (\a) \subset I(\ell, m)$ with $\beta_i = \widecheck{\beta}_i$ for $i = 1, \dots, \ell - 1$ and $\beta_{\ell} = m$. 
In fact, if we look at the identification $X_{\widecheck{\beta}}$ as an element of $\bigwedge^{m - \ell} V_{m-1} \subset \bigwedge^{m-\ell} V_m$, then it is actually equal to $X_{\beta}$.  
At any rate, we may assume that $\Ombh$ is embedded in $\PP^{k_{\widecheck{\a} - 1}}$, where the latter is given by the linear subspace of $\PP^{k_{\widecheck{\a}} - 1}$ mentioned above. Thus, under the identification of $s^{-1} (\nu)$ with $\Ombh$ described in \eqref{bijn}, we see that the set $s^{-1} (\nu) \cap \Pi_\a$ gets identified with the set $\Ombh \cap \Pi_{\widecheck{\a}}$ where $\Pi_{\widecheck{\a}}$ is the hyperplane of $\PP^{k_{\widecheck{\a}} - 1}$.  

To summarize, let $\Pi_\a$ is a hyperplane of $\PP^{k_\a - 1}$ satisfying (I) and (II) above. Then $\Pi_\a$ is given by a linear homogeneous polynomial 
$$F_\a = \sum_{{\beta \in \Delta (\a') \cap \nabla (\a) }} c_\beta X_\beta.$$ 
Then $\Pi_\a$ restricts to the hyperplane $\Pi_{\widecheck{\a}}$ given by 
$$F_{\widecheck{\a}} = \sum_{{\beta \in \Delta (\a') \cap \nabla (\a)}} c_\beta X_{\widecheck{\beta}}.$$ 

\begin{remark}\label{remnotcontain}\normalfont
    We readily see that a hyperplane satisfying (I) and (II) above can not contain $s^{-1}(\nu)$. Otherwise, it follows from \eqref{bijn}, \eqref{t2}, and Proposition \ref{ineq:dec} that $|\Pi_\a \cap \Omb| = |\Omb|$, which implies that $\Omb \subset \Pi_\a$, contradicting (I). In particular, we have $|\Pi_\a \cap s^{-1} (\nu)| < |s^{-1} (\nu)|.$
\end{remark}

Let us now introduce some notations for ease of reference.

\begin{definition}\normalfont
    Let $\a, \a',\widecheck{\a}, \ell, m$ be as defined above. With $\FF = \Fq$, we define
    \begin{enumerate}
        \item[(a)] $e_\a (\ell, m) := \max \{|\Omb \cap \Pi_\a| : \Pi_\a \ \text{is a hyperplane of} \ \PP^{k_{\a} - 1}\}$.
        \item[(b)] $e_{\a'} (\ell, m-1) := \max \{|\Ombb \cap \Pi_{\a'}| : \Pi_{\a'} \ \text{is a hyperplane of} \ \PP^{k_{\a'} - 1}\}$.
        \item[(c)] $e_{\widecheck{\a}} (\ell-1, m-1) := \max \{|\Ombh \cap \Pi_{\widecheck{\a}}| : \Pi_{\widecheck{\a}} \ \text{is a hyperplane of} \ \PP^{k_{\widecheck{\a}} - 1}\}$.
    \end{enumerate}
\end{definition}

We conclude this section with the following Proposition summarizing the discussion in the current section which can be directly used in the proof of our main theorem. 

\begin{proposition}\label{case1}
    Let $\FF = \Fq$. 
    If $\Pi_\a$ is a hyperplane of $\PP^{k_\a - 1}$ such that $\Pi_\a$ contains $\Ombb$, then 
    $$|\Pi_\a \cap \Omb| \le |\Ombb| + q^{m - \ell} e_{\widecheck{\a}}(\ell - 1, m-1).$$
\end{proposition}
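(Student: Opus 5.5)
The plan is to combine the counting identity \eqref{t2} with the identification of each string with the smaller Schubert variety $\Ombh$. Since $\Pi_\a$ contains $\Ombb$, hypothesis (II) holds. Hypothesis (I) is automatic, because $\Omb$ is non-degenerate in $\PP^{k_\a - 1}$ and so no hyperplane of $\PP^{k_\a - 1}$ contains it. By the discussion preceding \eqref{t2}, $F_\a$ is then a linear combination of the $X_\beta$ with $\beta \in \Delta(\a') \cap \nabla(\a)$. Equation \eqref{t2} therefore gives
$$|\Pi_\a \cap \Omb| = |\Ombb| + q^{m-\ell}\,|\Pi_\a \cap s^{-1}(\nu)|$$
for any fixed $\nu \in \Fq^{m-\ell}$. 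It therefore suffices to show that $|\Pi_\a \cap s^{-1}(\nu)| \le e_{\widecheck{\a}}(\ell-1, m-1)$.

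To prove this, I use the bijection \eqref{bijn}, $s^{-1}(\nu) \leftrightarrow \Ombh$, sending $M \mapsto \widecheck{M}$. For $M \in s^{-1}(\nu)$ and $\beta$ with $\beta_\ell = m$, the last column of $M$ is $(0,\dots,0,1)^T$. Laplace expansion of the $\beta$-minor along that column gives $X_\beta(M) = X_{\widecheck{\beta}}(\widecheck{M})$, where $\widecheck{\beta} = (\beta_1,\dots,\beta_{\ell-1})$. Hence $F_\a(M) = F_{\widecheck{\a}}(\widecheck{M})$, where $F_{\widecheck{\a}} = \sum c_\beta X_{\widecheck{\beta}}$, and so $\Pi_\a \cap s^{-1}(\nu)$ corresponds exactly to the zero set of $F_{\widecheck{\a}}$ on $\Ombh$. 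I also need that each $\widecheck{\beta}$ satisfies $\widecheck{\beta} \le \widecheck{\a}$. This holds because $\beta \in \nabla(\a)$ gives $\beta_i \le \a_i$ for $i \le \ell-1$, and $\beta_{\ell-1} < m$ forces $\widecheck{\beta} \in I(\ell-1, m-1)$. The map $\beta \mapsto \widecheck{\beta}$ is injective on $\{\beta : \beta_\ell = m\}$, so distinct coordinates remain distinct. Consequently $F_{\widecheck{\a}}$ is a linear form in the coordinates of $\PP^{k_{\widecheck{\a}}-1}$.

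The main point to check is that $F_{\widecheck{\a}} \neq 0$, so that it really defines a hyperplane $\Pi_{\widecheck{\a}}$ of $\PP^{k_{\widecheck{\a}}-1}$ and the definition of $e_{\widecheck{\a}}$ applies. This follows because $F_\a \neq 0$ (it defines a hyperplane), all $c_\beta$ are supported on $\beta$ with $\beta_\ell = m$, and the injectivity above carries the nonzero coefficients over to $F_{\widecheck{\a}}$. Then
$$|\Pi_\a \cap s^{-1}(\nu)| = |\Pi_{\widecheck{\a}} \cap \Ombh| \le e_{\widecheck{\a}}(\ell-1, m-1),$$
and substituting into \eqref{t2} gives the claimed bound. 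Alternatively, one could note via Remark \ref{remnotcontain} that $\Pi_{\widecheck{\a}}$ does not contain $\Ombh$, but the nonvanishing of $F_{\widecheck{\a}}$ is already enough.
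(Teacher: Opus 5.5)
Your proof is correct and follows the paper's argument: you combine \eqref{t2} with the identification $s^{-1}(\nu)\leftrightarrow\Ombh$, under which $\Pi_\a$ restricts to $\Pi_{\widecheck{\a}}$, and then bound by $e_{\widecheck{\a}}(\ell-1,m-1)$. The only difference is how you show that $\Pi_{\widecheck{\a}}$ is a genuine hyperplane: you use injectivity of $\beta\mapsto\widecheck{\beta}$ on the support of $F_\a$ to get $F_{\widecheck{\a}}\neq 0$, while the paper cites Remark \ref{remnotcontain}; these are equivalent by the non-degeneracy of $\Ombh$.
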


\begin{proof}
From \eqref{t2}, we know that  $|\Pi_\a \cap \Omb| = |\Pi_\a \cap \Ombb| + q^{m-\ell} |\Pi_\a \cap s^{-1} (\nu)|$. Moreover, as mentioned in Remark \ref{remnotcontain}, the hyperplane $\Pi_\a$ does not contain $s^{-1} (\nu)$. Thus $|\Pi_\a \cap s^{-1} (\nu)| = |\Pi_{\widecheck{\a}} \cap \Ombh| \le e_{\widecheck{\a}}(\ell - 1, m-1).$ The assertion now follows trivially. 
\end{proof}

\section{Main Theorem}
Throughout this section, we will assume that $\Fq$ is a finite field with $q$ elements. All the notations $V_m, \a, \a', \widecheck{\a}, \Omb, \Ombb, \ \text{and} \ \Ombh$ are as discussed in the previous sections. Before presenting our first result, which is a Schubert-analog of \cite[Lemma 4.1]{DD}, we define a special type of containment of Schubert subvarieties. 

\begin{definition}\normalfont
Let $\ell \le m - 1$.
 Let $\B$ be as above. Let $W_{m-1}$ be an $(m-1)$ dimensional subspace of $V_m$ with an ordered basis $\B_1 = \{w_1, \dots, w_{m-1}\}$. As usual, we denote by $W_i = Span \{w_1, \dots, w_i\}$ for $i =1, \dots, m-1$.  For $\a' = (\a_1, \dots, \a_k, \a_{k+1} - 1, \dots, \a_\ell - 1)$ as defined above, 
we say that $\Omega_{\a'} (\ell, W_{m-1}, \B_1)$ is \emph{strictly contained in} $\Omb$ if $V_{\a_i} = W_{\a_i} \ \text{for} \ i = 1, \dots, k$ and $V_{\a_k} \subset W_{m-1} \subset V_m$. 
In this case, we write $\Omega_{\a'} (\ell, W_{m-1}, \B_1) \le \Omb$ and define
$$\mathfrak{S}_\a (\ell, V_m, \B) := \{\Omega_{\a'} (\ell, W_{m-1}, \B_1) \le \Omb\}.$$
\end{definition}

\begin{remark}\label{salpha}\normalfont
    Before proceeding, it is worth understanding the above definition and its consequences a bit more carefully. Let us take $\Omega_{\a'} (\ell, W_{m-1}, \B_1) \in \mathfrak{S}_\a (\ell, V_m, \B)$. We write $\B_1 = \{w_1, \dots, w_{m-1}\}$ which serves as the fixed ordered basis of $W_{m-1}$. Let us extend it to an ordered basis $\tilde{B} = \{w_1, \dots, w_{m-1}, w_m\}$ of $V_m$. Let us consider the Schubert subvariety $\Omega_{\a} (\ell, V_m, \tilde{\B})$. This is given by the flag 
        $$V_{\a_1} \subset \cdots \subset V_{\a_k} \subset W_{m-(\ell - k) + 1} \subset \cdots \subset W_{m-1} \subset V_m.$$
        Note that, for any $L \in \Omb$, we have the Schubert conditions:
        $\dim (L \cap V_{\a_i}) \ge i$ for $i = 1, \dots, \ell$.
        It is clear that $i = m - (\ell-k) + 1, \dots, m-1$, we have $\dim (L \cap W_i) \ge i$. As a consequence, we have $\Omb \subset \Omega_{\a} (\ell, V_m, \tilde{\B})$. A similar argument establishes the reverse inclusion, consequently implying that $\Omb = \Omega_{\a} (\ell, V_m, \tilde{\B})$. Moreover, as per Definition \ref{def2} (b), we have $\tilde{\B}'= \B_1$. 
    \begin{enumerate}
        \item[(a)]  If a hyperplane $\Pi_\a$ of $\PP^{k_\a - 1}$ contains $\Omega_{\a'} (\ell, W_{m-1}, \B_1)$ for some $\Omega_{\a'} (\ell, W_{m-1}, \B_1) \in \mathfrak{S}_\a (\ell, V_m, \B)$, then we may assume, after a change of coordinates (induced by the change of bases from $\B$ to $\tilde{\B}$) that $\Pi_\a$ contains $\Ombb$. By doing this, we can use the tools we have developed above, as in Proposition \ref{case1}, among others.
        \item[(b)] Next, suppose $\Pi_\a$ is a hyperplane of $\PP^{k_\a - 1}$ such that $\Pi_\a$ does not contain $\Omega_{\a'} (\ell, W_{m-1}, \B_1)$. Again, with respect to the new coordinate system on $\PP^{k -1}$ and in particular $\PP^{k_\a - 1}$, we may assume that $\Omega_{\a'} (\ell, W_{m-1}, \B_1) = \PP^{k_{\a'} - 1} \cap \Omb$. We see that $\Pi_\a \cap \PP^{k_{\a'} - 1}$ is a hyperplane of $\PP^{k_{\a'} - 1}$ and we have
        $|\Omega_{\a'} (\ell, W_{m-1}, \B_1) \cap \Pi_\a| \le e_{\a'}(\ell, m-1)$. 
     \end{enumerate}
\end{remark}

\noindent We begin with the following proposition concerning the cardinalities of $\mathfrak{S}_\a (\ell, V_m, \B)$ and some of its special subsets.

\begin{proposition}\label{prop1}
   With the notations as above, we have 
   \begin{equation}\label{eq}
   |\mathfrak{S}_\a (\ell, V_m, \B)| = \frac{q^{m - \a_k} - 1}{q-1}.
   \end{equation}
   Moreover, for $L \in \Omb$, we have
    \begin{equation}\label{ineq}|\{\Omega_{\a'} (\ell, W_{m-1}, \B_1) \in \mathfrak{S}_\a (\ell, V_m, \B) : L \in \Omega_{\a'} (\ell, W_{m-1}, \B_1\}| \ge \frac{q^{(m - \a_k) - (\ell - k)} - 1}{q-1}.\end{equation}
\end{proposition}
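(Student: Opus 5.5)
The plan is to show that each member of $\mathfrak{S}_\a (\ell, V_m, \B)$, viewed as a subset of $G(\ell, V_m)$, is determined by the hyperplane $W_{m-1}$ alone. Both counts then reduce to counting hyperplanes of $V_m$ that contain a given subspace.

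\emph{Step 1: an intrinsic description.} By the definition of $k$, the entries $\a_{k+1}, \dots, \a_\ell$ are consecutive and end at $\a_\ell = m$. Hence $\a_j = m - \ell + j$ and $\a'_j = m - \ell + j - 1$ for $j > k$; in particular $\a'_\ell = m-1$. I would then check
$$\Omega_{\a'} (\ell, W_{m-1}, \B_1) = \{L \in G(\ell, V_m) : L \subseteq W_{m-1}, \ \dim (L \cap V_{\a_i}) \ge i \ \text{for} \ 1 \le i \le k\}.$$
\begin{enumerate}
\item[(i)] For $i \le k$, the Schubert conditions use $W_{\a_i} = V_{\a_i}$.
\item[(ii)] For $j > k$, the condition $\dim(L \cap W_{m-\ell+j-1}) \ge j$ is automatic once $L \subseteq W_{m-1}$. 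Indeed, $W_{m-\ell+j-1}$ has codimension $\ell - j$ in $W_{m-1}$, so $\dim (L \cap W_{m-\ell+j-1}) \ge \ell - (\ell - j) = j$.
\item[(iii)] The condition for $j = \ell$ is exactly $L \subseteq W_{m-1}$.
\end{enumerate}
Consequently, the member depends only on $W_{m-1}$ and not on the rest of $\B_1$.

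\emph{Step 2: distinct hyperplanes give distinct members.} I would show that the union of all $L$ in the set above equals $W_{m-1}$. Given $w \in W_{m-1}$, use $\ell \le m-1$ and the assumption $V_{\a_k} \subset W_{m-1}$. Choose $L$ spanned by $v_1, \dots, v_k$ (which satisfies the conditions for $i \le k$, since $\a_i \ge i$), together with $w$ and further vectors of $W_{m-1}$ as needed. This handles the case $w \notin \mathrm{span}\{v_1,\dots,v_k\}$; the other case is trivial. Hence $W_{m-1} \mapsto \Omega_{\a'}(\ell, W_{m-1}, \B_1)$ is a bijection from the hyperplanes of $V_m$ containing $V_{\a_k}$ onto $\mathfrak{S}_\a(\ell, V_m, \B)$.

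\emph{Step 3: the cardinality \eqref{eq}.} Hyperplanes of $V_m$ containing $V_{\a_k}$ correspond to hyperplanes of $V_m / V_{\a_k} \cong \Fq^{m - \a_k}$. There are $\frac{q^{m-\a_k}-1}{q-1}$ of these, which gives \eqref{eq}.

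\emph{Step 4: the inequality \eqref{ineq}.} Fix $L \in \Omb$. By Step 1, $L$ lies in the member attached to $W_{m-1}$ exactly when $W_{m-1} \supseteq V_{\a_k} + L$. The conditions for $i \le k$ hold automatically because $L \in \Omb$. Since $\dim (L \cap V_{\a_k}) \ge k$, we get
$$\dim (V_{\a_k} + L) \le \a_k + \ell - k.$$
The number of hyperplanes containing $V_{\a_k} + L$ is therefore $\frac{q^{m - \dim(V_{\a_k}+L)}-1}{q-1} \ge \frac{q^{(m-\a_k)-(\ell-k)}-1}{q-1}$. This bound is also valid, with right-hand side $0$, when $\a_k + \ell - k = m$.

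\emph{Main obstacle.} The delicate point is Steps 1--2: the statement counts members of $\mathfrak{S}_\a$ as Schubert varieties, that is, as sets. One must therefore verify both that different choices of $\B_1$ with the same $W_{m-1}$ give the same set, and that different $W_{m-1}$ give different sets. The remaining steps are routine subspace counts.
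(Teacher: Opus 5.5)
Your proposal is correct and follows the paper's route: both counts reduce to counting hyperplanes of $V_m$ containing $V_{\a_k}$, respectively $V_{\a_k}+L$, together with the bound $\dim(V_{\a_k}+L)\le \a_k+\ell-k$. Your Steps 1--2 (the member depends only on $W_{m-1}$, and distinct $W_{m-1}$ give distinct sets) make explicit what the paper only asserts; also, your degenerate case $\a_k+\ell-k=m$ never occurs, since $(m-\a_k)-(\ell-k)=\a_{k+1}-\a_k-1\ge 1$.
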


 \begin{proof}
     Note that $\Omega_{\a'} (\ell, W_{m-1}, \B_1) \le \Omb$ if and only if $V_{\a_i} = W_{\a_i}$ for all $i= 1, \dots, k$ and $V_{\a_k} \subset W_{m-1} \subset V_m$. Moreover, for every pairwise distinct $W_{m-1}$ and $W_{m-1}'$ satisfying the above condition, we get pairwise distinct Schubert subvarieties $\Omega_{\a'} (\ell, W_{m-1}, \B_1)$ and $\Omega_{\a'} (\ell, W_{m-1}', \B_1')$, where $\B_1'$ is obtained by using the partial flag obtained from the partial flag defining $\B_1$ by replacing $W_{m-1}$ with $W_{m-1}'$. Thus 
     $$ 
     |\mathfrak{S}_\a (\ell, V_m, \B)| = |\{W_{m-1} : V_{\a_k} \subset W_{m-1} \subset V_m\}| 
     = \frac{q^{m - \a_k} - 1}{q-1}.
     $$ 
     This proves \eqref{eq}. Using a similar argument, we see that 
     \begin{align*}
         &|\{\Omega_{\a'} (\ell, W_{m-1}, \B_1)  : L \in \Omega_{\a'} (\ell, W_{m-1}, \B_1) \le \Omb\}| \\
         &=|\{W_{m-1} : L + V_{\a_k} \subset W_{m-1} \subset V_m\}| \\
         &=|\{W_{m-1} : L + V_{\a_k} \subset W_{m-1} \subset V_m, \ \dim (L + V_{\a_k}) \le  \a_k + \ell - k\}| \\
         &\ge |\PP^{m - (\a_k + \ell - k)-1}| 
         = \frac{q^{(m - \a_k) - (\ell - k)} - 1}{q-1}.
     \end{align*}
     This completes the proof. 
 \end{proof}

\begin{lemma}\label{dc}
    Let $\Pi_\a$ be a hyperplane of $\PP^{k_\a - 1}$. Then 
    $$|\Pi_\a \cap \Omb| \le \left(\frac{q^{m-\a_k} - 1}{q^{(m - \a_k) - (\ell - k)} - 1}\right) M_\a,$$ where $M_\a= \max\{|\Pi_\a \cap \Omega_{\a'} (\ell, V_{m-1}, B_1)| : \Omega_{\a'} (\ell, V_{m-1}, B_1) \in \mathfrak{S}_\a (\ell, V_m, \B)\}.$
\end{lemma}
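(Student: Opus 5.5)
This is a standard double-counting argument over incidences between points of $\Pi_\a \cap \Omb$ and members of $\mathfrak{S}_\a (\ell, V_m, \B)$, mirroring \cite[Lemma 4.1]{DD}. Let
$$N := \frac{q^{m-\a_k}-1}{q-1}, \qquad N_0 := \frac{q^{(m-\a_k)-(\ell-k)}-1}{q-1},$$
and consider the set of pairs
$$\mathcal{I} := \{(L, \Omega) : L \in \Pi_\a \cap \Omb, \ \Omega \in \mathfrak{S}_\a (\ell, V_m, \B), \ L \in \Omega\}.$$
We count $|\mathcal{I}|$ in two ways.

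\emph{Counting by $L$.} Fix $L \in \Pi_\a \cap \Omb$. By \eqref{ineq} of Proposition \ref{prop1}, the number of $\Omega \in \mathfrak{S}_\a (\ell, V_m, \B)$ containing $L$ is at least $N_0$. Hence
$$|\mathcal{I}| \ge N_0\, |\Pi_\a \cap \Omb|.$$

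\emph{Counting by $\Omega$.} Fix $\Omega \in \mathfrak{S}_\a (\ell, V_m, \B)$. Each such $\Omega$ is contained in $\Omb$; this follows from Remark \ref{salpha}, after changing to the basis $\tilde{\B}$, together with $\Ombb \subseteq \Omb$. So the number of admissible $L \in \Omega$ is $|\Pi_\a \cap \Omega|$, which is at most $M_\a$ by the definition of $M_\a$. By \eqref{eq} there are exactly $N$ such $\Omega$, so
$$|\mathcal{I}| \le N\, M_\a.$$

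Combining the two counts and cancelling the common factor $q-1$ gives
$$|\Pi_\a \cap \Omb| \le \frac{N}{N_0}\, M_\a = \left(\frac{q^{m-\a_k}-1}{q^{(m-\a_k)-(\ell-k)}-1}\right) M_\a,$$
which is the claim.

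The only real checks are that $N_0 > 0$ and that each $\Omega$ sits inside $\Omb$. For positivity we need $m - \a_k > \ell - k$. This holds because $\a_{k+1} - \a_k \ge 2$ and $\a_{k+1}, \dots, \a_\ell$ are consecutive integers ending at $\a_\ell = m$, so $m - \a_k = (\ell - k - 1) + (\a_{k+1} - \a_k) \ge \ell - k + 1$. The containment is the main conceptual point. I would verify it through the description in Remark \ref{salpha}: for $L \in \Omega_{\a'} (\ell, W_{m-1}, \B_1)$, the Schubert conditions for $\a'$ with respect to the flag $V_{\a_1} \subset \cdots \subset V_{\a_k} \subset W_{\a_{k+1}-1} \subset \cdots \subset W_{m-1}$ imply $\dim (L \cap V_{\a_i}) \ge i$ for $i \le k$. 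For $i > k$, they give $\dim (L \cap W_{\a_i - 1}) \ge i$, and this yields the condition $\dim(L \cap V_{\a_i}) \ge i$ needed for membership in $\Omb$, up to the identification $\Omb = \Omega_\a(\ell, V_m, \tilde{\B})$ established in the remark.
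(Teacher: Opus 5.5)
Your proof is correct and is essentially the paper's argument: the same incidence set of pairs $(L,\Omega)$, with a lower bound per point from \eqref{ineq} and an upper bound per member of $\mathfrak{S}_\a (\ell, V_m, \B)$ from $M_\a$ and \eqref{eq}. Your check that $(m-\a_k)-(\ell-k)\ge 1$ is a useful addition that the paper leaves implicit; the containment $\Omega \subseteq \Omb$ is true but not needed, since $|\{L \in \Pi_\a \cap \Omb : L \in \Omega\}| \le |\Pi_\a \cap \Omega| \le M_\a$ holds regardless.
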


\begin{proof}
    We define the following incidence set $\mathfrak{I}$ and count it in two ways:
    $$\mathfrak{I} := \left\{\left(L, \Omega_{\a'} (\ell, W_{m-1}, \B_1) \right) \in \Omb \times \mathfrak{S}_\a (\ell, V_m, \B)  :   \ L \in \Pi_\a \cap \Omega_{\a'} (\ell, W_{m-1}, \B_1)  \right\}.$$
    One one hand, we have 
    \begin{align*}
    |\mathfrak{I}| &= \sum_{L \in \Pi_\a \cap \Omb} \#\{\Omega_{\a'} (\ell, W_{m-1}, \B_1) | L \in \Omega_{\a'} (\ell, W_{m-1}, \B_1) \le \Omb\} \\
&\ge \sum_{L \in \Pi_\a \cap \Omb} \frac{q^{(m - \a_k) - (\ell - k)} - 1}{q-1} \\
&= |\Pi_\a \cap \Omb| \frac{q^{(m - \a_k) - (\ell - k)} - 1}{q-1}.
    \end{align*}
    The inequality above is a consequence of \eqref{ineq}. On the other hand, we have
    \begin{align*}
    |\mathfrak{I}| &= \sum_{\Omega_{\a'} (\ell, W_{m-1}, \B_1)\le \Omb} \#\{L \in \Pi_\a \cap \Omega_{\a'} (\ell, W_{m-1}, \B_1)\} \\
&\le \sum_{\Omega_{\a'} (\ell, W_{m-1}, \B_1) \le \Omb} M_\a 
=|\{\Omega_{\a'} (\ell, W_{m-1}, \B_1) \le \Omb\}| M_\a = \frac{q^{m - \a_k} - 1}{q-1} M_\a.
    \end{align*}
    The last equality follows from \eqref{eq}. 
We have thus obtained 
$$ |\Pi_\a \cap \Omb| \frac{q^{(m - \a_k) - (\ell - k)} - 1}{q-1} \le |\mathfrak{J}| \le \frac{q^{m - \a_k} - 1}{q-1} M_\a.$$
The assertion now follows trivially. 
\end{proof}

\noindent Let us now include an auxiliary Lemma that will be useful in the computations. 

\begin{lemma}\label{aux}
    For $q > q_0 (\ell)= \frac{2^{\frac{1}{\ell - 1}}}{2^{\frac{1}{\ell - 1}} - 1}$, we have
    $$\frac{q^{m - \a_k} - 1}{q^{(m- \a_k) - (\ell - k)} - 1}  \left(|\Ombb| - q^{\delta (\a')}\right) < |\Omb| - q^{\delta (\a)}.$$
\end{lemma}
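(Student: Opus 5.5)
The plan is to clear the denominator and reduce the statement exactly to Proposition \ref{ineq:dec}(b), which was designed for this purpose.

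\emph{Step 1: rewrite the exponents.} Set $d := \a_{k+1} - \a_k - 1$. By the definition of $k$ we have $d \ge 1$. Moreover $\a_{k+1}, \dots, \a_\ell$ are consecutive and $\a_\ell = m$ by Remark \ref{reduction}(b), so $\a_{k+1} = m - (\ell - k - 1)$. Hence
$$m - \a_k = d + (\ell - k), \qquad (m - \a_k) - (\ell - k) = d .$$
The ratio in the statement is therefore $\frac{q^{d+\ell-k}-1}{q^d-1}$, and its denominator is positive. From the definition of $\a'$ we also get
$$\delta(\a) = \delta(\a') + (\ell - k).$$

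\emph{Step 2: clear the denominator.} Multiplying by $q^d - 1 > 0$, the claim becomes
$$(q^{d+\ell-k}-1)\bigl(|\Ombb| - q^{\delta(\a')}\bigr) < (q^d - 1)\bigl(|\Omb| - q^{\delta(\a)}\bigr).$$
Moving the cardinalities to the left, this is equivalent to
$$(q^d-1)|\Omb| - (q^{d+\ell-k}-1)|\Ombb| > (q^d-1)q^{\delta(\a)} - (q^{d+\ell-k}-1)q^{\delta(\a')}.$$
Substituting $q^{\delta(\a)} = q^{\ell-k}q^{\delta(\a')}$ on the right, the terms $q^{d+\ell-k}q^{\delta(\a')}$ cancel. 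The right-hand side simplifies to $q^{\delta(\a')} - q^{\delta(\a)}$.

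\emph{Step 3: invoke Proposition \ref{ineq:dec}(b).} For $q > q_0(\ell)$, Proposition \ref{ineq:dec}(b) gives
$$q^{d}\bigl(|\Omb| - q^{\ell-k}|\Ombb|\bigr) > q^{\delta(\a')} + |\Omb| - |\Ombb| - q^{\delta(\a)}.$$
Moving $|\Omb| - |\Ombb|$ to the left gives
$$(q^d - 1)|\Omb| - (q^{d+\ell-k} - 1)|\Ombb| > q^{\delta(\a')} - q^{\delta(\a)}.$$
This is exactly the inequality reached in Step 2, which proves the lemma.

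The only delicate point is the bookkeeping in Step 1. One has to verify that $m - \a_k - (\ell - k)$ equals the exponent $\a_{k+1} - \a_k - 1$ that appears in Theorem \ref{ineq:prel}. This uses the convention $\a_\ell = m$ together with the fact that $\a_{k+1}, \dots, \a_\ell$ are consecutive integers. Once this identification is made, the remaining argument is algebraic rearrangement.
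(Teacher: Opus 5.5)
Your proof is correct and follows the paper's argument. Like the paper, you clear the denominator and use $\alpha_\ell = m$ together with the consecutiveness of $\alpha_{k+1},\dots,\alpha_\ell$ to identify $(m-\alpha_k)-(\ell-k)$ with $\alpha_{k+1}-\alpha_k-1$, and then invoke Proposition~\ref{ineq:dec}(b). Your version also spells out the cancellation via $\delta(\alpha)=\delta(\alpha')+(\ell-k)$, which the paper leaves implicit when it asserts the equivalence.
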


\begin{proof}
    Note that the inequality in the assertion is equivalent to the inequality:
    \begin{align*}
        q^{(m - \a_k) - (\ell - k)} (|\Omb| - & q^{\ell - k} |\Ombb|)\\ 
        & > (|\Omb| - |\Ombb|) - q^{\delta (\a)} + q^{\delta(\a')} . 
    \end{align*}
   It follows from the definition of $k$ that $m = \a_\ell = \a_{k+1}+ (\ell - k - 1)$. Consequently, $(m - \a_k) - (l - k)  = \a_{k+1} - \a_k - 1$. The assertion now follows from Proposition \ref{ineq:dec} (b).
\end{proof}

\begin{remark}\label{trivialcases}\normalfont
    Before proving our main theorem, let us warm up by considering a few trivial cases. As before, let $1 \le \ell < m$, $V_m$ a vector space of dimension $m$ over $\Fq$ with an ordered basis $\B$ and $\a \in I(\ell, m)$. Our goal will be to prove the following results, when $q > q_0 (\ell)$: 
    \begin{enumerate}
    \item[(a)] $e_\a (\ell, m) = |\Omb| - q^{\delta(\a)}$.
    \item[(b)] If $\Pi_\a$ is a hyperplane such that $|\Pi_\a \cap \Omb| = |\Omb| - q^{\delta(\a)}$, then $\Pi_\a$ is Schubert decomposable. 
    \end{enumerate}
    As we have mentioned in Remark \ref{where} (a), it was proved by Ghorpade and Singh \cite{GS} that if $\Pi_\a$ is a hyperplane that satisfies $|\Pi_\a \cap \Omb| = |\Omb| - q^{\delta(\a)}$ and $\Pi_\a$ is decomposable, that is $\Pi_\a$ is given by a decomposable hyperplane of $\PP^{k-1}$, then $\Pi_\a$ is Schubert decomposable. Thus, while proving our main theorem, we will only restrict ourselves to proving that a hyperplane attaining the above bound is decomposable. Also, we will settle a few extreme cases completely here, so that we can ignore them in our main proof. 
    \begin{enumerate}
        \item[(I)] Suppose $\a = (\a_1, \dots, \a_\ell)$ are consecutive. In this case, $\Omb$ is the Grassmannian $G(\ell, V_m)$ and the result is known in this case due to Nogin \cite{N}. Moreover, a method similar to this article was used in \cite{DD} to obtain an alternative proof of this result. 
        
        \item[(II)] Let us now consider the case when $\ell = m - 1$. In this case $G(\ell, V_{m})$ is the projective space $\PP^{m-1}$. We write $I(\ell, m) = \{\Gamma_1, \dots, \Gamma_{m}\}$, where $\Gamma_i = (1, 2, \dots, i-1, \widecheck{i}, i+1, \dots, m)$ for $i = 1, \dots, m$. That is, each $\Gamma_i$ is obtained by writing down the string of consecutive integers $1, \dots, m$ and dropping the integer $i$. Also, due to our standing convention, we do not need to consider the Schubert subvariety corresponding to $\Gamma_m = (1, \dots, m-1)$.  Let us now proceed to prove part (a) above and fix $\a_i$. Note that, with respect to a fixed basis $\B$ of $V_m$, $\Omega_{\Gamma_i} (m-1, V_m, \B)$ is a linear subspace of $\PP^{m-1}$ of codimension $(i-1)$. Consequently, $|\Omega_{\Gamma_i} (m-1, V_m, \B)| = 1 + \dots + q^{m-i}$. Clearly, a hyperplane of $\PP^{m-1}$, that does not contain $\Omega_{\Gamma_i} (m-1, V_m, \B)$ intersect it at most at $1 + \dots + q^{m-i - 1}$ many points. This proves part (a). Moreover, part (b) is trivially true since any hyperplane of $\PP\left(\bigwedge^{m-1} V_m\right)$ is decomposable. 
    \end{enumerate} 
\end{remark}

\begin{theorem}\label{main}
    Let $q > q_0(\ell)= \frac{2^{\frac{1}{\ell - 1}}}{2^{\frac{1}{\ell - 1}} - 1}
    $ and $\ell,m$ be positive integers satisfying $2 \le \ell < m$. For $\a = (\a_1, \dots, \a_\ell) \in I(\ell, m)$ and any hyperplane in $\PP^{k_\a - 1}$, we have 
    $$|\Pi_\a \cap \Omega_\a(\ell, V_m, \B)| \le e_\a (\ell, m) := |\Omega_\a(\ell, V_m, \B)| - q^{\delta(\a)}.$$
    Moreover, the bound is attained if and only if $\Pi_\a$ is Schubert Decomposable. 
\end{theorem}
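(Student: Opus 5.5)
The plan is an induction on $\ell + m$, with $\ell \ge 1$ allowed in the hypotheses. The base cases are the ones settled in Remark \ref{trivialcases}: consecutive $\a$, i.e. Grassmannians via Nogin; and $\ell = m-1$. For $\ell = 1$ the variety is a projective subspace and every hyperplane is decomposable by Remark \ref{rem1:dec}(b). So assume $\a$ is not consecutive and $\ell \le m-2$. Then $k$, $\a'$ and $\widecheck{\a}$ are defined as in Definition \ref{def1}, and the induction hypothesis applies to $\Ombb \subset G(\ell, V_{m-1})$ and to $\Ombh \subset G(\ell-1, V_{m-1})$. Since $q_0$ is increasing in $\ell$, we have $q > q_0(\ell) \ge q_0(\ell-1)$, so the hypothesis on $q$ is inherited. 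By \cite[Theorem 5.3]{GS}, Schubert decomposable hyperplanes attain $|\Omb| - q^{\delta(\a)}$. Hence it suffices to prove the upper bound and to show that any hyperplane attaining it is decomposable; Remark \ref{where}(a) then upgrades decomposable to Schubert decomposable.

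Fix a hyperplane $\Pi_\a$ and split into two cases according to Remark \ref{salpha}.

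\textbf{Case A: $\Pi_\a$ contains some $\Omega_{\a'}(\ell, W_{m-1}, \B_1) \in \mathfrak{S}_\a(\ell, V_m, \B)$.} After the change of basis in Remark \ref{salpha}(a), we may assume $\Pi_\a \supseteq \Ombb$. Proposition \ref{case1} and induction for $\widecheck{\a}$ give
$$|\Pi_\a \cap \Omb| \le |\Ombb| + q^{m-\ell}\bigl(|\Ombh| - q^{\delta(\widecheck{\a})}\bigr) = |\Omb| - q^{\delta(\a)}.$$
The equality uses Proposition \ref{ineq:dec}(a) together with the identity $m-\ell+\delta(\widecheck{\a}) = \delta(\a)$, which holds because $\a_\ell = m$. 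If equality holds, then the restricted hyperplane $\Pi_{\widecheck{\a}}$ attains the bound for $\Ombh$, so by induction it is given by a decomposable $(m-\ell)$-vector $z$ in $\bigwedge^{m-\ell} V_{m-1}$. We also know $F_\a$ involves only the $X_\beta$ with $\beta_\ell = m$. Under the identification \eqref{hypid}, these coordinates correspond to $(m-\ell)$-vectors without $v_m$, i.e. to elements of $\bigwedge^{m-\ell}V_{m-1}$. So the same $z$ is a decomposable extension of $\Pi_\a$ to $\PP^{k-1}$. Some care is needed here: the extension of $\Pi_{\widecheck{\a}}$ may include $\Delta(\widecheck{\a})$-terms, and these must be matched with $\Delta(\a)$-terms in $\PP^{k-1}$. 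I expect the bookkeeping of this identification to work out.

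\textbf{Case B: $\Pi_\a$ contains no member of $\mathfrak{S}_\a$.} By Remark \ref{salpha}(b) and induction for $\a'$, we get $M_\a \le |\Ombb| - q^{\delta(\a')}$. Lemma \ref{dc} combined with Lemma \ref{aux} then gives the strict inequality $|\Pi_\a \cap \Omb| < |\Omb| - q^{\delta(\a)}$ for $q > q_0(\ell)$. So the bound holds and equality never occurs in this case, and all extremal hyperplanes fall under Case A.

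The main obstacle is the equality analysis in Case A: transferring decomposability from $\Pi_{\widecheck{\a}}$, a hyperplane in the smaller Schubert space of $G(\ell-1, V_{m-1})$, back to a decomposable form on $\bigwedge^{m-\ell}V_m$. My first approach would be to use the characterization of Remark \ref{rem1:dec}(a). I would choose the extension of $F_{\widecheck{\a}}$ in $\bigwedge^{m-\ell}V_{m-1}$ to be decomposable, which the induction hypothesis permits. Since $\dim \V(z) = m-\ell$ is computed in $V_{m-1}\subset V_m$, this dimension is unchanged when $z$ is viewed in $V_m$. One must also check that the coordinate change of Remark \ref{salpha}(a) preserves decomposability; it does, since it is induced by a linear automorphism of $V_m$.
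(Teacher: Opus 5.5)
Your proposal is correct and follows essentially the same route as the paper. You induct, split on whether $\Pi_\alpha$ contains a member of $\mathfrak{S}_\alpha(\ell, V_m, \B)$, get a strict inequality in one case from Lemma \ref{dc} with Lemma \ref{aux}, use Proposition \ref{case1} and the strings in the other, and finally lift decomposability from $\Pi_{\widecheck{\alpha}}$ to $\Pi_\alpha$ before invoking \cite[Theorem 5.5]{GS}. Your explicit $\ell=1$ base case and your handling of the $\Delta(\widecheck{\alpha})$-terms are slightly more careful than the paper's assertion that the two hyperplanes are given by the same element: those terms are exactly the $X_{(\gamma,m)}$ with $(\gamma,m)\in\Delta(\alpha)$, so a decomposable extension in $\bigwedge^{m-\ell}V_{m-1}$ is a decomposable extension of $\Pi_\alpha$.
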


\begin{proof}
   We prove the assertions by induction on $m$. Note that the hypothesis mandates $m \ge 3$. When $m=3$, we have $(\ell, m) = (2, 3)$, and the assertions follow  from the remark above. 
   We may now assume that $m > 3$ and that both assertions are true for Schubert subvarieties $\Omega _\a(\ell, V_{m-1}, \B)$ for any vector space $V_{m-1}$ of dimension $m-1$ and $\ell \le m-1$.
   Let $V_m$ be a vector space of dimension $m$ over $\Fq$ with a fixed ordered basis $\B = \{v_1, \dots, v_m\}$. For $\a = (\a_1, \dots, \a_\ell) \in I(\ell, m)$, we consider the Schubert subvariety $\Omb$. In view of Remark \ref{trivialcases}, we may assume that $\a_1, \dots, \a_\ell$ are not completely consecutive. 
  If $\ell = m-1$, then the assertion follows from Remark \ref{trivialcases} (II). In fact, the remark shows that the result is true even without the hypothesis that $q > q_0 (\ell)$. Thus, we may assume that $2 \le \ell \le m-2$. 
  
  Suppose $\Pi_\a$ is a hyperplane of $\PP^{k_\a - 1}$. We distinguish the proof in two cases:
  \begin{enumerate}
      \item[{\bf Case 1:}] Suppose $\Pi_\a$ does not contain $\Omega_{\a'} (\ell, W_{m-1}, \B_1)$ for any $\Omega_{\a'} (\ell, W_{m-1}, \B_1) \in \mathfrak{S}_\a (\ell, V_m, \B)$. In view of Remark \ref{salpha} (b) and induction hypothesis, we have, for $q > q_0 (\ell)$ 
     $$|\Omega_{\a'} (\ell, W_{m-1}, \B_1) \cap \Pi_\a| \le |\Omega_{\a'} (\ell, W_{m-1}, \B_1)| - q^{\delta ({\a'})}.$$
    Since this is true for all elements of $\mathfrak{S}_\a(\ell,V_m,\B)$, we may assume that $M_{\a} \le |\Omega_{\a'} (\ell, W_{m-1} , \B_1)| - q^{\delta ({\a'})}$ as in Lemma \ref{dc} and obtain 
    $$|\Omb \cap \Pi_\a| \le \left(\frac{q^{m-\a_k} - 1}{q^{(m - \a_k) - (\ell - k)} - 1}\right)\left(|\Omega_{\a'} (\ell, W_{m-1}, \B_1)| - q^{\delta ({\a'})}\right).$$
    Since $q > q_{0} (\ell)$, using Lemma \ref{aux}, we have $$|\Omb \cap \Pi_\a| < |\Omb| - q^{\delta (\a)},$$
    as desired. We note that, the equality does not occur in this case. 
      \item[{\bf Case 2:}] Suppose $\Pi_\a$ contains $\Omega_{\a'} (\ell, W_{m-1}, \B_1)$ for some element in $\mathfrak{S}_\a(\ell,V_m,\B)$. Proposition \ref{case1} applies, and yields
      $$|\Omb \cap \Pi_\a| \le |\Omega_{\a'}(\ell, m-1, \B_1)| + q^{m-\ell} e_{\widecheck{\a}}(\ell - 1, m-1).$$
      Since, by assumption $q > q_0 (\ell)$ and $q_0 (\ell) > q_0 (\ell - 1)$, the induction hypothesis implies that $e_{\widecheck{\a}}(\ell - 1, m-1) = |\Omega_{\widecheck{a}} (\ell - 1, W_{m-1}, \B_1')| - q^{\delta (\widecheck{\a})}$. The desired inequality now follows from Proposition \ref{ineq:dec} (a).   
  \end{enumerate}
  Note that, a possible equality arises if and only if $\Pi_\a$ contains an element $\Omega_{\a'} (\ell, W_{m-1}, \B_1) \in \mathfrak{S}_\a(\ell, V_m, \B)$ and that for each string $s^{-1} (\nu)$  of $\Omb \setminus \Omega_{\a'} (\ell, W_{m-1}, \B_1)$ the equality $|s^{-1}(\nu) \cap \Pi_\a| = e_{\widecheck{\a}} (\ell - 1, m-1)$ holds. As noted in Remark \ref{salpha}, we have $\Omb \setminus \Omega_{\a'} (\ell, W_{m-1}, \B_1) = \Omega_{\a} (\ell, V_m, \tilde{\B}) \setminus \Omega_{\a'} (\ell, W_{m-1}, \tilde{\B}')$. Thus, we may use the bijection
  $s^{-1} (\nu) \longleftrightarrow \Omega_{\widecheck{\a}} (\ell, W_{m-1}, \tilde{B}')$ as discussed before.  By induction hypothesis, $\Pi_{\widecheck{\a}}$ is Schubert decomposable and hence decomposable.
  
  We claim that, the hyperplanes $\Pi_\a$ and $\Pi_{\widecheck{\a}}$ are given by the same element of $\PP\left(\bigwedge^{m-\ell} V_m\right)$. 
  Note that if the hyperplane $\Pi_\a$ is given by the polynomial $F_\a = \sum_{\beta \in \nabla (\a) \cap \Delta (\a')} c_\beta X_{\beta}$, then in terms of $(m-\ell)$-wedges, the polynomial $F_\a$ can be rewritten as
  $$F_\a = \displaystyle{\sum_{\beta \in \nabla (\a) \cap \Delta (\a')}} c_{\beta} v_{\beta^{\mathsf{c}}},$$
  where for every $\beta =(\beta_1, \dots, \beta_\ell) \in \nabla (\a) \cap \Delta (\a')$, we have  $v_{\beta^{\mathsf{c}}} = v_{\beta_1}' \wedge \cdots \wedge v_{\beta_{m - \ell}'}$ such that $\{\beta_1, \dots, \beta_\ell\} \cup \{\beta_{1}', \dots, \beta_{m-\ell}'\} = \{1, \dots, m\}$. Since, for all the $\beta \in \nabla (\a) \cap \Delta (\a')$, we have $\beta_\ell = m$, we see that $\beta_j' < m$, for all $j = 1, \dots, m-\ell$.  
  On the other hand, given a hyperplane $\Pi_\a$ defined by the polynomial $F_\a$, as above, the hyperplane $\Pi_{\widecheck{\a}}$ is given by $F_{\widecheck{\a}} = \displaystyle{\sum_{\beta \in \nabla (\a) \cap \Delta (\a')}}  c_\beta X_{\widecheck{\beta}}$, where for each $\beta = (\beta_1, \dots, \beta_\ell) \in \nabla (\a) \cap \Delta (\a')$, we have $\widecheck{\beta} = (\beta_1, \dots, \beta_{\ell - 1})$. At any rate, in terms of $(m-1)-(\ell -1)$ wedges, we may write 
  $$F_{\widecheck{\a}} = \displaystyle{\sum_{\beta \in \nabla (\a) \cap \Delta (\a')}} c_{\beta} v_{\widecheck{\beta^{\mathsf{c}}}},$$
  where, as above, for $\widecheck{\beta} = (\beta_1, \dots, \beta_{\ell - 1}) \in I(\ell - 1, m-1)$, we have $v_{\widecheck{\beta^{\mathsf{c}}}} = v_{\beta_1'} \wedge \dots \wedge v_{\beta_{m-\ell}'}$ such that $\{\beta_1, \dots, \beta_{\ell - 1}\} \cup \{\beta_1', \dots, \beta_{m-\ell}'\} = \{1, \dots, m-1\}$. Thus our claim is proved.

  Thus the decomposability of $\Pi_{\widecheck{\a}}$ guarantees that $\Pi_\a$ is also decomposable. Thus by \cite[Theorem 5.5]{GS}, as pointed out in Remark \ref{where} (a),  that $\Pi_{\alpha}$ is Schubert decomposable.  
\end{proof}

We conclude this article with a remark on the quantity $q_0 (\ell)$.

\begin{remark}\normalfont
    The function $q_0 (\ell)$, as pointed out before, is an increasing function on $\ell$. One can check with a calculator that $q_0 (2) = 2$, $q_0 (3) \approx 3.14, ... $. In particular, our result establishes the MWCC for $\ell = 2$ with $q \ge 3$, $\ell = 3$ with $q \ge 4$ etc. We also note, and leave it to the reader to check that,
    $$\lim_{\ell \to \infty} \dfrac{q_0 (\ell)}{\ell} = \frac{1}{\ln 2} \approx 1.4427.$$
    An obvious question arises. How can one improve the situation? In particular, is it possible to use our methodology to prove the MWCC in complete generality? If not, is it at least possible to replace $q_0 (\ell)$ with some $q_1 (\ell)$ in our hypothesis of Theorem \ref{main} where $q_1 (\ell) < q_0(\ell)$ for all large values of $\ell$? We do not yet have answers to these questions. However, it is conceivable that the possible improvement lies in obtaining better bounds for the number of points on Schubert subvarieties over finite fields in Proposition \ref{upp}, which might also lead to understanding the quantities $\a_i$ in \eqref{schen1}. We leave these as problems for further research.  
\end{remark}

\end{document}